\renewcommand{\maketag@@@}[1]{\hbox{\m@th\normalsize\normalfont#1}}%
\DeclareMathOperator*{\sbigotimes}{\text{\raisebox{0.25ex}{\scalebox{0.8}{$\bigotimes$}}}}
\newtheorem{prop}{PROPOSITION}
\newtheorem{corol}{COROLLARY}
\newtheorem{conje}{CONJECTURE}
\newcommand{\eqd}{\vcentcolon=}
\newcommand{\be}{\begin{equation}}
\newcommand{\ee}{\end{equation}}
\newcommand{\bs}{\begin{split}}
\newcommand{\es}{\end{split}}
\newcommand{\ba}{\begin{array}}
\newcommand{\ea}{\end{array}}
\newcolumntype{C}[1]{>{\centering\let\newline\\\arraybackslash\hspace{0pt}}m{#1}}
\newcommand{\rn}[1]{\textup{\uppercase\expandafter{\romannumeral#1}}}
\begin{document}
%% Classifying quantum entanglement through orthogonal arrays
\title{Coarse-grained entanglement classification through orthogonal arrays}
\author{Luigi Seveso}
\affiliation{Quantum Technology Lab, Dipartimento di Fisica, Universit\`{a} degli Studi di Milano, I-20133 Milano, Italy}
\author{Dardo Goyeneche}
\affiliation{Institute of Physics, Jagiellonian University, 30-348 Krak\'ow, Poland}
\affiliation{Faculty of Applied Physics and Mathematics, Gda\'{n}sk University of Technology, 80-233 Gda\'{n}sk, Poland}
\affiliation{Departamento de F\'{i}sica, Facultad de Ciencias B\'{a}sicas, Universidad de Antofagasta, Casilla 170, Antofagasta, Chile}
\author{Karol {\.Z}yczkowski}
\affiliation{Institute of Physics, Jagiellonian University, 30-348 Krak\'ow, Poland}
\affiliation{Center for Theoretical Physics, Polish Academy of Sciences, 02-668 Warsaw, Poland}
\date{June 27, 2018}

\begin{abstract}
Classification of entanglement in multipartite quantum systems is an open problem solved so far only for bipartite systems and for systems composed of three and four qubits. We propose here a coarse-grained classification of entanglement in systems consisting of $N$ subsystems with an arbitrary number of internal levels each, based on properties of orthogonal arrays with $N$ columns.
In particular, we investigate in detail a subset of highly entangled pure states which contains all states defining maximum distance separable codes. To illustrate the methods presented, we analyze systems of four and five qubits, as well as heterogeneous tripartite systems consisting of two qubits and one qutrit or one qubit and two qutrits. 
\end{abstract}

\maketitle

\section{Introduction}\label{S1}
Characterization of entanglement in multipartite systems has proven particularly challenging, even for pure states. As one moves from a bipartite to a multipartite scenario involving $N$ parties, the algebraic structure becomes much richer. A pure multipartite quantum state is described by a tensor with $N$ indices, for which there exists no exact analogue of the singular value decomposition of a matrix which, for bipartite systems, leads to the Schmidt decomposition \cite{Pe95}. Nonetheless, study of multipartite entanglement remains a crucial goal, with applications ranging from quantum information processing, quantum computation and quantum metrology to condensed matter and many-body physics \cite{HHHH09,amico2008entanglement}. 
% wen book

One notable problem is to characterize all the different ways in which a multipartite system can share entanglement among its subsystems. A natural approach is to consider two states equivalent, if it is possible to obtain one from the other via local operations assisted by classical communication, with a nonzero probability \cite{bennett2000exact}. Protocols based on stochastic local operations and classical communication (SLOCC) correspond formally to the action of the group of invertible local transformations 
$\text{GL($d$)}^{\otimes N}$, where $d$ is the dimension of the local Hilbert space $\mathcal H_{d}$ of each of the $N$ subsystems \cite{dur2000three}. The  problem of entanglement classification is thus equivalent to finding the orbits of $\mathcal H_{d}^{\otimes N}$ under the group $\text{GL($d$)}^{\otimes N}$. 
A few results are available in this regard. For instance, for three qubits there exist six entanglement classes \cite{dur2000three}, of which only two display genuine tripartite entanglement: the GHZ class, containing states equivalent to the GHZ state $\ket{GHZ_3}=\ket{000}+\ket{111}$, and the W class, generated by the state $\ket{W_3}=\ket{001}+\ket{010}+\ket{100}$. However, already for four qubits, there is an infinite number of SLOCC classes, which can be naturally organized into nine continuous families \cite{VDMV02}. The situation gets more complicated for larger numbers of qubits. Some alternative approaches, based on topological ~\cite{kauffman2002quantum,QA18} or algebraic ~\cite{buniy2012algebraic} techniques, provide only first steps towards the resolution of the problem.

In this paper, we analyze various classes of multipartite entanglement in a simplified setting, by restricting the Hilbert space to a discrete set. Each state in the set is related to a certain combinatorial design, called an \emph{orthogonal array} (OA). An orthogonal array is a rectangular array of symbols taken from an alphabet of $d$ letters, whose combinatorial properties are characterized by its \emph{strength} -- defined in the next Section.

Orthogonal arrays were introduced by Rao in 1947 \cite{rao1947factorial} and find several applications ranging from cryptography and coding theory to the statistical design of experiments, software testing and quality control \cite{GOA05}. Moreover, they generalize some remarkable classes of combinatorial designs: Graeco-Latin squares, Hadamard matrices and classical codes \cite{hedayat1999orthogonal}. Useful libraries of these arrays can be found in the handbook on Combinatorial Designs by Colbourn and Dinitz \cite{colbourn2006handbook}, as well as in the online catalogs provided by Sloane \cite{sloaneweb} and Kuhfeld \cite{kuhfeldweb}.

Any orthogonal array with $N$ columns generates a multipartite pure state $|\psi\rangle$ of a quantum system made up of $N$ parties. In general, the pure states of a quantum system form a continuous set, but only a discrete subset of states, referred to as \emph{array-based}, is associated to an orthogonal array. In previous work \cite{goyeneche2014genuinely}, a particular class of orthogonal arrays of strength $k$, called irredundant, was shown to generate a class of quantum states, called $k$-uniform, defined by the property that each state is maximally entangled with respect to any splitting of the $N$ subsystems into two subsets composed of $k$ and $N-k$ subsystems. Here we will extend this observation much further, by studying properties of quantum states associated with different classes of orthogonal arrays. 

The aim of this paper is to approach the problem of entanglement classification for a general system containing $N$ subsystems with $d$ levels each, by making use of results from the theory of orthogonal arrays with $N$ columns and symbols from a set of $d$ letters. As a byproduct, we construct a particular family of pure states, related to \emph{generating orthogonal arrays} -- see Section \ref{S3}. These quantum states exhibit a high degree of entanglement, making them potentially interesting for various quantum information processing tasks, such as quantum teleportation in a multi-user setting, quantum key distribution and quantum error-correcting codes.

The paper is organized as follows. In Section \ref{S2} we review basic properties of orthogonal arrays and in Section \ref{S3} we show how to derive systematically the generating arrays. In Section \ref{S4}, the problem of entanglement classification for the set of array-based quantum states is formulated. In Section \ref{S5} we show how the theory of quantum entanglement can be used to determine whether two given arrays are non-isomorphic. In Section \ref{S6}, we describe the classes of quantum entanglement for states corresponding to generating arrays. The main results of this work are summarized in Section \ref{S7}. A complete list of generating OAs for systems of two up to four qubits are presented in Appendix \ref{app1}. Similar data for  systems consisting of five qubits and for heterogeneous tripartite systems consisting of qubits and qutrits are available online.

\section{Orthogonal arrays: definition and basic properties}\label{S2}
An \emph{orthogonal array} OA($r,N,d,k$) is a rectangular arrangement with $N$ columns and $r$ rows containing symbols taken from the alphabet $\mathscr A_d = \{0,\,1,\dots,\,d-1\}$, such that for any subarray made up of $k$ columns, every possible combination of $k$ symbols is repeated the same number of times $\lambda$. The parameters of the array are usually called the number of \emph{runs} $r$,
the number of \emph{factors} $N$, the \emph{index} $\lambda$ and the \emph{strength} $k$ of the OA, respectively \cite{hedayat1999orthogonal}. 

This terminology is explained by the connection with experimental design theory. Every column of an orthogonal array represents one factor, each with $d$ possible values or levels; every row represents a different run or treatment combination. By definition, the \emph{full factorial design}, denoted by $\mathscr F_{N,d}$, is the array made up of all $d^N$ treatment combinations. Unless the number of factors is small, it becomes exponentially costly to perform a full factorial experiment. One thus resorts to a \emph{fractional factorial design}, where only a subset of all treatment combinations is allowed as possible runs. An orthogonal array with $r < d^N$ is a fractional factorial design, with the strength $k$ being related to the estimation properties of different factorial effects -- for more information consult Ref.~\cite{hedayat1999orthogonal}.

In this paper, we consider only OAs with number of runs not greater than $d^N$. We also do not distinguish between arrays differing only by a reordering of their runs; in fact, as factorial designs, they are equivalent for all statistical purposes. Moreover, if two arrays can be obtained one from the other by implementing any composition of permutations of rows, columns and symbols within columns, then the arrays are called \emph{isomorphic}. 

While the strength $k$ of an array is a simple measure of its usefulness for statistical applications, there is a variety of arrays with the same strength. To discriminate among them, different quality factors have been developed in the literature \cite{dean2015handbook}. In particular, the \emph{generalized resolution} (GR) \cite{deng1999generalized,tang1999minimum,tang2001theory,gromping2014generalized} 
is a commonly used quality factor; it has a clear statistical interpretation for two-level OAs \cite{deng2002design}. In order to compute the GR, each entry $a_{ij}$ of the OA is encoded as a complex root of unity, by mapping each symbol $s\in\mathscr A_d$  to $\omega_s=\text{exp}(2\pi i s /d)$. Given the multi-index $I = j_1 \dots j_n$, denoting a subset of $n<N$ columns, the corresponding $J$-characteristic of order $n$ is defined as 
\be
J_n(I) \eqd \left|\sum_{i=1}^r \omega_{a_{ij_1}}\cdot \omega_{a_{ij_2}}\dots \omega_{a_{ij_n}}\right|=\left|\sum_{i=1}^r e^{\frac{2\pi i}{d}\sum_{j\in I} a_{ij} }\right|\;,
\ee
 where $|z|$ is the absolute value of $z\in \mathbb C$. Let $t$ be the smallest integer such that there exists at least one $J$-characteristic of order $t$ different from zero (let us remark that $t>k$ since all possible $J$-characteristics of order up to $k$ are zero). Define $J_t^{(\text{max})} \eqd \underset{I: |I|=t}{\text{max}} J_t(I)$, where $|I|$ denotes the cardinality of the multi-index $I$. Then, the generalized resolution is defined as
\be\label{GRdef}
GR \eqd t + 1 - \frac{J_t^{(\text{max})}}{r}\;.
\ee
Clearly, $t<GR<t+1$. For two-level OAs, the above quantity is invariant under isomorphisms. However, for multi-level arrays, permutations of symbols within columns can change quantity {\ref{GRdef}) \cite{cheng2004geometric}. Hence the generalized resolution of a multi-level OA is defined as the maximum value of \eqref{GRdef} taken over the family of all isomorphic arrays. 

Let the set of all orthogonal arrays OA($r,N,d,k$) with fixed values of parameters $N$, $d$ and $k$, but an arbitrary number of runs $r\leq d^N$, be denoted by $\mathscr{O\!A}(N,d,k)$. Characterizing this set for general values of its parameters is a challenging problem and only partial results are  known \cite{clark2001equivalence,angelopoulos2007effective,schoen2010complete}. For instance, complete catalogs of non-isomorphic orthogonal arrays, denoted by OA($12,N,2,k$), OA($16,N,2,k$) and OA($20,N,2,k$), for any number of rows $N$ and strength $k$, are provided in Ref.~\cite{sun2008algorithmic}. Furthermore, OA($18, N,3,k$) were classified \cite{tsai2006complete} for all possible $N$ and $k$. A method to enumerate non-isomorphic  two-level orthogonal arrays of strength $k$ and index $\lambda$ with $N=k+2$ columns and $r = 2k\lambda$ rows was presented in ~\cite{stufken2007complete}. Further orthogonal arrays with a small number of runs have been classified in \cite{Ye2008construction}.

\section{Generating orthogonal arrays}\label{S3}
In this section, we describe a systematic method which, in principle, allows one 
to obtain a complete description of the set $\mathscr{O\!A}(N,d,k)$
for general values of the parameters.
In practice, computational complexity limits its applicability to low-dimensional cases. 

Let us introduce the composition function, denoted by $\oplus$, 
and defined as follows,
\be
\begin{array}{cccc}
a_{1,1}&a_{1,2}&\dots&a_{1,N}\\
a_{2,1}&a_{2,2}&\dots&a_{2,N}\\
\vdots&\vdots&&\vdots\\
a_{r,1}&a_{r,2}&\dots&a_{r,N}\\
\end{array} \ \oplus \ \begin{array}{cccc}
a'_{1,1}&a'_{1,2}&\dots&a'_{1,N}\\
a'_{2,1}&a'_{2,2}&\dots&a'_{2,N}\\
\vdots&\vdots&&\vdots\\
a'_{s,1}&a'_{s,2}&\dots&a'_{s,N}\\
\end{array} \ = \ 
\begin{array}{cccc}
a_{1,1}&a_{1,2}&\dots&a_{1,N}\\
\vdots&\vdots&&\vdots\\
a_{r,1}&a_{r,2}&\dots&a_{r,N}\\
a'_{1,1}&a'_{1,2}&\dots&a'_{1,N}\\
\vdots&\vdots&&\vdots\\
a'_{s,1}&a'_{s,2}&\dots&a'_{s,N}\\
\end{array}\;,
\ee 
assuming that $r+s\leq d^N$. Let us remark that $\oplus$ is only a \emph{partial} composition function, in contrast to a \emph{total} one, in the sense it is well-defined only for two OAs such that their number of runs is not greater than $d^N$. The operation $\oplus$ admits an identity element, which is the empty array (having $r=0$ rows); moreover, it is associative and abelian. 

The set $\mathscr{O\!A}(N,d,k)$ contains a subset of OAs, referred to as \emph{generating OAs}, which cannot be written as the composition of any two OAs (unless trivially, when one of them is the empty array), while every other OA, which is not in such subset, can be written as a composition of two or more generating OAs. In this way, the generating OAs allow for a compact description of the entire set $\mathscr{O\!A}(N,d,k)$. 

The following method to compute the generating OAs was developed 
by Pistone et al.~\cite{pistone1996generalised,pistone2000algebraic,carlini2007hilbert} 
from the point of view of algebraic statistics. Our starting point, instead, is the following connection between orthogonal arrays and quantum theory. 
Any OA($r,N,d,k$) encodes a pure quantum state $\ket{\Psi_{N,d,k}}\in{\mathcal H}^{\otimes N}_d$ of $N$ qudits, with $\mathcal H_d\simeq \mathbb C^d$ denoting the local Hilbert space of each subsystem, via the following mapping \cite{goyeneche2014genuinely}:
\begin{equation}\label{OAPsi}
\tau:\quad OA(r,N,d,k)=
\begin{array}{cccc}
a_{1,1}&a_{1,2}&\dots&a_{1,N}\\
a_{2,1}&a_{2,2}&\dots&a_{2,N}\\
\vdots&\vdots&&\vdots\\
a_{r,1}&a_{r,2}&\dots&a_{r,N}\\
\end{array}\hspace{0.5cm}\Longrightarrow\hspace{0.5cm}
\ket{\Psi_{N,d,k}}=\sum_{i=1}^{r}|a_{i,1}a_{i,2},\dots,a_{i,N} \rangle\in
%%{(\mathbb{N}_0^{d})}^{\otimes N},
{(\mathcal H_d)}^{\otimes N},
\end{equation}
where 
%$\mathbb{N}_0=\mathbb{N}\cup\{0\}$,
 $|a_{i,1}a_{i,2},\dots,a_{i,N}\rangle$ denotes the product state
$|a_{i,1}\rangle\otimes|a_{i,2}\rangle\otimes\dots\otimes|a_{i,N}\rangle$,
 and $\otimes$ stands for the Kronecker product. 
For example,
\begin{equation}
\tau:\quad OA(2,2,2,1)=
\begin{array}{cccc}
0&0\\
1&1
\end{array}\hspace{0.5cm}\Longrightarrow\hspace{0.5cm}
\ket{\Psi_{2,2,1}}=\ket{00}+\ket{11}\;,
\end{equation}
leads to the Bell state $\ket{\Phi^+}$ of two qubits. 
For convenience, we consider unnormalized 
pure quantum states $\ket{\Psi_{N,d,k}}$ along our work.

For a given set of orthogonal arrays $\mathscr{O\!A}(N,d,k)$, its image under the map $\tau$ will be denoted by $\mathcal{O\!A}(N,d,k)$; it is the set of corresponding \emph{array-based} quantum states belonging to $\mathcal H_d^{\otimes N}$. It includes a variety of states useful for different tasks of quantum information processing. In particular, the subset of states generated by \emph{irredundant} orthogonal arrays,  written IrOA,  plays a special role in quantum mechanics \cite{goyeneche2014genuinely,goyeneche2016multipartite}. An OA($r,N,d,k$) is called irredundant if every subarray made up of $N-k$ columns contains no repeated rows. Any IrOA($r,N,d,k$) defines a multipartite pure state of $N$ parties, with $d$ levels each, such that every reduction to $k$ parties is maximally mixed. Therefore entanglement with respect to any splitting of the $N$ subsystems into subsets of size $k$ and $N-k$ is maximal; such states are are called $k$-uniform. 

In particular, an irredundant array with index unity, written IrOA($d^k,N,d,k$), defines a $k$-uniform state with minimal support. Such arrays are completely equivalent to maximum distance separable (MDS) codes \cite{S04}. Speaking intuitively, their rows can be used to efficiently protect information against the presence of errors. MDS codes have the property that the Hamming distance between any two codewords, i.e.~any two rows of an irredundant array with index unity, is a constant taking the maximal allowed value (according to the Singleton bound). A comprehensive introduction to codes and their relation to orthogonal arrays can be found in Chapters 4 and 5 of Ref.~\cite{hedayat1999orthogonal}.

From Eq.~\eqref{OAPsi}, any OA($r,N,d,k$) is uniquely associated to an $N$-partite pure quantum state $\ket{\Psi_{N,d,k}}$ which can be rewritten as follows,
\begin{equation}\label{Psi}
\ket{\Psi_{N,d,k}}=\sum_{i_1,\dots,i_N=0}^{d-1}c_{i_1\cdots i_N}\,|i_1\cdots i_N\rangle\;,
\end{equation}
where the coefficients $c_{i_1\cdots i_N}$ are nonnegative integers counting how many times the run $i_1\cdots i_N$ occurs along the orthogonal array OA($r,N,d,k$). Eq.~(\ref{Psi}) allows one to express the requirement of orthogonality of the array as a linear system of equations for the coefficients $c_{i_1\cdots i_N}$ of $\ket{\Psi_{N,d,k}}$. 

For instance, consider the set $\mathscr{O\!A}(2,2,1)$. Taking into account Eq.~(\ref{Psi}), one has that the coefficients of the state 
\be
\ket{\Psi_{2,2,1}}=c_{00}\ket{00}+c_{01}\ket{01}+c_{10}\ket{10}+c_{11}\ket{11}\;.
\ee
are constrained to satisfy 
\begin{eqnarray}\label{SOE21}
c_{00}+c_{01}=c_{10}+c_{11}\;,\nonumber\\
c_{00}+c_{10}=c_{01}+c_{11}\;.
\end{eqnarray}
Eq.~(\ref{SOE21}) implies that the same number of zeros and ones occurs in each column of the array. In addition, one has to impose the constraint
\be\label{cstrff}
c_{00}+c_{01}+c_{10}+c_{11}\leq 4\;,
\ee
so that the total number of runs is not greater than the number of runs for the full factorial design. As another example, consider the case of $\mathscr{O\!A}(3,2,2)$. Orthogonality of the array requires that every possible combination of two symbols taken from the set $\{0,1\}$ occurs the same number of times in every pair of columns. Imposing such restrictions to the state
\begin{eqnarray}
\ket{\Psi_{3,2,2}}&=&c_{000}\ket{000}+c_{001}\ket{001}+c_{010}\ket{010}+c_{011}\ket{011}+\nonumber\\
&&c_{100}\ket{100}+c_{101}\ket{101}+c_{110}\ket{110}+c_{111}\ket{111},
\end{eqnarray}
leads to the linear constraints 
 \begin{eqnarray}\label{EQ32}
&c_{000}+c_{001}=c_{010}+c_{011}=c_{100}+c_{101}=c_{110}+c_{111}\label{EQ32_1}\;,\nonumber\\
&c_{000}+c_{010}=c_{001}+c_{011}=c_{100}+c_{110}=c_{101}+c_{111}\label{EQ32_2}\;,\nonumber\\
&c_{000}+c_{100}=c_{001}+c_{101}=c_{010}+c_{110}=c_{011}+c_{111}\label{EQ32_3}\;.
\end{eqnarray}
There is also a further constraint similar to that of Eq.~\eqref{cstrff}.

More generally, the following result can be established:
\begin{prop}\label{prop1}
Any orthogonal array in $\mathscr{O\!A}(N,d,k)$ is uniquely associated to a pure quantum state 
\be
\ket{\Psi_{N,d,k}}=\sum_{i_1,\dots,i_N=0}^{d-1}c_{i_1\cdots i_N}\,|i_1\cdots i_N\rangle ,
\ee
with coefficients $c_{i_1\cdots i_N}$ satisfying the following set of linear constraints
\begin{equation}\label{LC}
\sum_{i_1,\dots,i_{N-k}=0}^{d-1} c_{\sigma(i_1\cdots i_{N-k},\,j_1\cdots j_k)}= \sum_{i_1,\dots,i_{N-k}=0}^{d-1}c_{\sigma(i_1\cdots i_{N-k},\,j'_1\cdots j'_k)}\;,
\end{equation}
together with the further constraint
\be\label{cstr}
\sum_{i_1,\dots,i_N=0}^{d-1}c_{i_1\cdots i_N} \leq d^N\;.
\ee
Here, $\sigma \in S_N$ is any permutation on the space of multi-indices of length $N$, and $j_1\cdots j_k$, $j'_1\cdots j'_k$ are any two multi-indices of length $k$.  
\end{prop}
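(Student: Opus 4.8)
The plan is to show that the map $\tau$ of Eq.~\eqref{OAPsi} identifies each array (up to the row-reorderings we have agreed to ignore) with its vector of run-multiplicities, and then to translate the defining orthogonality of strength $k$ directly into the linear system \eqref{LC}.

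First I would make the association precise. By construction $c_{i_1\cdots i_N}$ is the number of times the run $(i_1,\dots,i_N)$ occurs in the array, so $\tau$ sends an array to a tuple of nonnegative integers, and two arrays share the same image exactly when they agree up to a permutation of rows. Since we identify such arrays, the correspondence between $\mathscr{O\!A}(N,d,k)$ and admissible coefficient vectors is one-to-one, which establishes the claimed unique association.

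Next I would fix an arbitrary $k$-element subset $T\subseteq\{1,\dots,N\}$ of columns and argue that the strength-$k$ condition for $T$ is precisely \eqref{LC} for a suitable $\sigma$. The number of rows in which the columns of $T$ display a prescribed $k$-tuple $(j_1,\dots,j_k)$ is obtained by summing the multiplicities $c$ over all values of the remaining $N-k$ columns; choosing $\sigma\in S_N$ to place the summation indices $i_1\cdots i_{N-k}$ in the positions of $T^c$ and the fixed symbols $j_1\cdots j_k$ in the positions of $T$, this count is exactly the left-hand side of \eqref{LC}. Strength $k$ demands that this count equal the common index $\lambda$ for every $(j_1,\dots,j_k)$, and in particular that it coincide for any two $k$-tuples $(j_1,\dots,j_k)$ and $(j'_1,\dots,j'_k)$; this is \eqref{LC}. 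Conversely, if \eqref{LC} holds for all $\sigma$ and all pairs of $k$-tuples, then for each $k$-subset the corresponding counts are independent of the displayed symbols, which is orthogonality of strength $k$. The constraint \eqref{cstr} is immediate, since the total number of runs is $r=\sum_{i_1,\dots,i_N}c_{i_1\cdots i_N}$ and we work throughout under the standing assumption $r\le d^N$.

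I expect the only delicate point to be the bookkeeping in the permutation notation: letting $\sigma$ range over all of $S_N$ reproduces each of the $\binom{N}{k}$ genuine column-subset conditions many times over, because permutations acting within the block of $N-k$ summed indices leave the sum invariant, while permutations within the block of $k$ fixed indices merely relabel the tuple $(j_1,\dots,j_k)$. I would therefore note explicitly that the effective content of \eqref{LC} is indexed by the choice of which $k$ columns are held fixed, so that no redundant or spurious constraints are introduced and the system \eqref{LC} is equivalent to, rather than merely implied by, the strength-$k$ property.
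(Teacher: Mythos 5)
Your proposal is correct and follows essentially the same route as the paper's proof: you count the occurrences of a prescribed $k$-tuple in a chosen set of $k$ columns by summing the multiplicities $c_{i_1\cdots i_N}$ over the complementary $N-k$ columns, and then equate these counts for different $k$-tuples to obtain Eq.~\eqref{LC}. You are somewhat more thorough than the paper in that you also spell out the uniqueness of the association (arrays identified up to row reordering), the converse implication from \eqref{LC} back to strength $k$, and the redundancy in letting $\sigma$ range over all of $S_N$, all of which the paper leaves implicit.
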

\begin{proof}
Consider an arbitrary combination of $k$ symbols, denoted by $j_1\dots j_k$, where each symbol is taken from the alphabet $\{0,\,1,\dots,\,d-1\}$. Then, the total number of times that such a combination appears in the last $k$ columns of a given array, denoted by $\#(j_1\dots j_k)$, is obtained by summing over all possible combinations of symbols in the first $N-k$ columns of the array, i.e. 
\be\label{js}
\#(j_1\dots j_k)=\sum_{i_1,\dots,i_{N-k}=0}^{d-1}c_{i_1\cdots i_{N-k},\,j_1\cdots j_k}\;.
\ee
By definition of orthogonal array, any combination of $k$ symbols occurs the same number of times in any $k$ columns. In particular, this holds for the last $k$ columns. Using Eq.~(\ref{js}) we have 
\be
\#(j_1\dots j_k)=\sum_{i_1,\dots,i_{N-k}=0}^{d-1}c_{i_1\cdots i_{N-k},\,j_1\cdots j_k}=\sum_{i_1,\dots,i_{N-k}=0}^{d-1}c_{i_1\cdots i_{N-k},\,j'_1\cdots j'_k}=\#(j'_1\dots j'_k)\;.
\ee
Applying a similar condition for \emph{any} possible choice of $k$ columns leads us to Eq.~\eqref{LC}. 
\end{proof}
The linear system of Eq.~\eqref{LC} imposes a total of $(d^k-1)\times N!/[k!(N-k)!]$  constraints on the coefficients of the state $\ket{\Psi_{N,d,k}}$. If the coefficients are allowed to take nonnegative real values, each linearly independent constraint defines an hyperplane, restricted to the positive orthant of ${(\mathbb{R}^{d})}^{\otimes N}$. The intersection among all hyperplanes gives rise to a rational cone, denoted by $C(N,d,k)$, which is made bounded by the further constraint of Eq.~\eqref{cstr}. The resulting convex polytope is denoted by $P(N,d,k)$. Points belonging to $ P(N,d,k)$ and having nonnegative integer coordinates are one-to-one related to orthogonal arrays in $\mathscr{O\!A}(N,d,k)$. We therefore arrive at the following result:
{\begin{prop}\label{prop2}
Any orthogonal array in $\mathscr{O\!A}(N,d,k)$ is uniquely associated to a point 
of the convex polytope $P(N,d,k)$ having nonnegative integer coordinates, where $ P(N,d,k)$ is defined by the set of linear constraints \eqref{LC} and \eqref{cstr}.
\end{prop}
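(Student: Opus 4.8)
The plan is to recognize Proposition \ref{prop2} as the geometric repackaging of Proposition \ref{prop1} together with its converse. First I would view the coefficient tuple $(c_{i_1\cdots i_N})$ as the coordinate vector of a point in $\mathbb{R}^{d^N}$, indexing the axes by the $d^N$ multi-indices $i_1\cdots i_N$. Proposition \ref{prop1} already supplies the forward direction: to any array in $\mathscr{O\!A}(N,d,k)$ it assigns, via $\tau$, the vector whose entry $c_{i_1\cdots i_N}$ counts the multiplicity of the run $i_1\cdots i_N$. These entries are nonnegative integers and, by Proposition \ref{prop1}, satisfy both the orthogonality constraints \eqref{LC} and the run bound \eqref{cstr}; hence the associated point lies in $P(N,d,k)$ and has nonnegative integer coordinates.

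It remains to upgrade this assignment to a bijection. For injectivity, I would note that since arrays differing only by a reordering of runs are identified in $\mathscr{O\!A}(N,d,k)$, the multiplicity vector $(c_{i_1\cdots i_N})$ is a complete invariant of an array: two arrays with the same coefficient vector have the same multiset of rows and therefore coincide. Thus distinct points correspond to distinct arrays.

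The key step is surjectivity, i.e.\ the converse of Proposition \ref{prop1}. Given any point of $P(N,d,k)$ with nonnegative integer coordinates, I would reconstruct an array by writing down, for each multi-index $i_1\cdots i_N$, exactly $c_{i_1\cdots i_N}$ copies of the corresponding run. Nonnegativity makes this prescription meaningful, and \eqref{cstr} guarantees that the total number of runs does not exceed $d^N$, so the result is admissible under the conventions of Section \ref{S2}. One then checks that the resulting array has strength $k$: constraint \eqref{LC} states precisely that, for every choice of $k$ columns (encoded by the permutation $\sigma$) and every pair of $k$-symbol combinations $j_1\cdots j_k$ and $j'_1\cdots j'_k$, the two combinations occur equally often in those columns. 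Reading the chain of reasoning in the proof of Proposition \ref{prop1} backwards therefore shows that each $k$-combination appears a common number of times, which is the defining property of an OA of strength $k$.

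The main obstacle is not analytic but combinatorial bookkeeping: one must verify that, as $\sigma$ ranges over $S_N$ and the $k$-indices range over all pairs, the equalities in \eqref{LC} collectively exhaust all $\binom{N}{k}$ choices of $k$ columns. A point worth flagging explicitly is that equality of the counts \emph{within} each fixed set of $k$ columns already forces a globally constant index, since summing the common count over all $d^k$ combinations yields $\lambda = r/d^k$, independent of which columns were selected. Once this is in place, the forward map and the reconstruction are mutually inverse, establishing the claimed one-to-one correspondence.
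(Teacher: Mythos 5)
Your argument is correct and follows essentially the same route as the paper, which states Proposition \ref{prop2} as an immediate geometric repackaging of Proposition \ref{prop1} without a separate formal proof. You usefully make explicit the two points the paper leaves implicit --- injectivity via identification of arrays with the same multiset of runs, and surjectivity by reconstructing an array from any nonnegative integer point of $P(N,d,k)$ and reversing the counting argument of Proposition \ref{prop1} --- but this is exactly the correspondence the paper intends.
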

The set $\mathscr{O\!A}(N,d,k)$ can thus be visualized as a $d^N$-dimensional integer lattice. It can be fully characterized with the help of the Gordan's lemma \cite{bruns2010polytopes}, a well-known result from invariant theory. The lemma states that any integer point of a rational cone can be represented as a combination with nonnegative integer weights of a finite number of points. The requirement that the set of such points is as small as possible makes it unique. It is usually referred to as the Hilbert basis of the cone \cite{hilbert1993theory}. 

We introduce the following terminology. The arrays in the set $\mathscr{O\!A}(N,d,k)$, which 
are represented by points of $P(N,d,k)$ belonging to the Hilbert basis of $C(N,d,k)$, are referred to as the \emph{generating OAs}, while the quantum states related to them are called \emph{generating states}. For any given values of the parameters $N$, $d$ and $k$, we denote the family of all generating arrays by $\mathscr G(N,d,k)$ and the corresponding set of quantum states by $\mathcal G(N,d,k)$. The discussion above can be then summarized through the following proposition,
\begin{prop}\label{prop3}
$\mathscr{O\!A}(N,d,k)$ consists of all OAs of the form 
\be
\text{OA($r,N,d,k$)} = \bigoplus_{i=1}^m \alpha_i\, \text{OA$^{(i)}_{N,d,k}$}\;,
\ee
where $\alpha_i \in \mathbb N_0$, $\mathbb{N}_0\eqd \mathbb{N}\cup\{0\}$, $m$ is the cardinality of the Hilbert basis of the rational cone $C(N,d,k)$ and \text{OA$^{(i)}_{N,d,k}$} is the generating OA associated to the $i^{\text{th}}$ element of the Hilbert basis. The numbers $\alpha_i$ are 
constrained to satisfy $\sum_{i=1}^m \alpha_i\, r^{(i)} \leq d^N$, where $r^{(i)}$ is the number of runs of \text{OA$^{(i)}_{N,d,k}$}. 

At the same time, the set $\mathcal{O\!A}(N,d,k)$ consists of all states of the form
\begin{equation}
\ket{\Psi_{N,d,k}} = \sum_{i=1}^m \alpha_i\,\ket{g_{N,d,k}^{(i)}}\;,
\end{equation}
where $\ket{g_{N,d,k}^{(i)}}$ is the generating state based on \text{OA$^{(i)}_{N,d,k}$} and the $\alpha_i$ are constrained as above.
\end{prop}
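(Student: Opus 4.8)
The plan is to derive Proposition~\ref{prop3} as a direct consequence of Gordan's lemma applied to the rational cone $C(N,d,k)$, together with the correspondence between lattice points and orthogonal arrays established in Proposition~\ref{prop2}. First I would invoke Gordan's lemma, which guarantees that the finitely generated rational cone $C(N,d,k)$ admits a unique minimal Hilbert basis $\{h^{(1)},\dots,h^{(m)}\}$, such that every integer point $c$ of the cone can be written as $c = \sum_{i=1}^m \alpha_i\, h^{(i)}$ with nonnegative integer coefficients $\alpha_i\in\mathbb N_0$. By definition, the generating array $\text{OA}^{(i)}_{N,d,k}$ is the array associated via $\tau$ to the basis element $h^{(i)}$, so the decomposition of lattice points translates into a statement about arrays.

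Next I would translate the additive decomposition of coefficient vectors into the language of the composition function $\oplus$. The key observation is that, under the map $\tau$, the operation $\oplus$ on arrays corresponds precisely to the addition of coefficient vectors in $(\mathbb R^d)^{\otimes N}$: stacking the runs of two arrays adds the multiplicities with which each run $i_1\cdots i_N$ appears. Hence a coefficient decomposition $c = \sum_i \alpha_i\, h^{(i)}$ becomes an array decomposition $\text{OA} = \bigoplus_{i=1}^m \alpha_i\,\text{OA}^{(i)}_{N,d,k}$, where $\alpha_i\,\text{OA}^{(i)}_{N,d,k}$ denotes $\alpha_i$-fold composition of the generating array with itself. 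The corresponding statement for states follows immediately from the linearity of $\tau$, giving $\ket{\Psi_{N,d,k}} = \sum_{i=1}^m \alpha_i\,\ket{g_{N,d,k}^{(i)}}$.

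I would then account for the boundedness constraint. An arbitrary integer point of the \emph{cone} $C(N,d,k)$ need not lie inside the \emph{polytope} $P(N,d,k)$, since the latter is cut out by the additional inequality \eqref{cstr} bounding the total number of runs by $d^N$. Applying $\tau^{-1}$ to \eqref{cstr} and using that the total number of runs of $\bigoplus_i \alpha_i\,\text{OA}^{(i)}_{N,d,k}$ equals $\sum_{i=1}^m \alpha_i\, r^{(i)}$, where $r^{(i)}$ is the number of runs of the $i$-th generating array, the membership of the point in $P(N,d,k)$ is exactly the condition $\sum_{i=1}^m \alpha_i\, r^{(i)}\leq d^N$. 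This recovers the stated constraint on the $\alpha_i$ and ensures that the composition $\oplus$ is well-defined, since it is only a partial operation valid when the total number of runs does not exceed $d^N$.

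The main obstacle, and the point requiring the most care, is to verify that the correspondence is genuinely two-directional and exhaustive: every array in $\mathscr{O\!A}(N,d,k)$ arises from such a decomposition, and conversely every choice of admissible $\alpha_i$ yields a valid orthogonal array. The forward direction relies on Proposition~\ref{prop2} identifying arrays with integer points of $P(N,d,k)$, combined with Gordan's lemma; the converse requires checking that any nonnegative integer combination of Hilbert basis elements satisfying \eqref{cstr} is itself an integer point of $P(N,d,k)$, hence corresponds to a bona fide array. This last check is where one must confirm that the linear constraints \eqref{LC} defining the cone are preserved under nonnegative integer combinations — which holds because they are homogeneous linear equalities — so that the composition of generating arrays is automatically orthogonal of the required strength.
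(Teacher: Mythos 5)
Your proposal is correct and follows essentially the same route as the paper, which does not give a formal proof but presents Proposition~\ref{prop3} as a summary of the preceding discussion: Gordan's lemma applied to the cone $C(N,d,k)$, combined with the lattice-point correspondence of Proposition~\ref{prop2} and the run-count bound \eqref{cstr}. Your additional check that the converse direction holds because the constraints \eqref{LC} are homogeneous linear equalities is a detail the paper leaves implicit, but it is the same argument.
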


In Fig.~\ref{Fig1}, we illustrate Proposition \ref{prop3} by showing a three-dimensional projection of the convex polytope $P(3,2,1)$, with the lattice points on its faces representing orthogonal arrays OA$(r,3,2,1)$. Let us also remark that the problem of computing the Hilbert basis of a rational cone is well-studied, with relevant applications in different branches of mathematics, such as combinatorics \cite{weismantel1996hilbert}, integer programming \cite{graver1975foundations} and computational algebra \cite{cox1992ideals}. Specialized software is available, like the open source tool \emph{Normaliz} \cite{bruns2010normaliz}. Though in general computationally hard \cite{durand1999complexity}, for small values of the parameters $N$, $d$ and $k$, the problem can be readily solved, providing a compact representation of $\mathscr{O\!A}(N,d,k)$.
 
%To continue  previous examples, we 
Let us return back to the illustrative set of arrays denoted by $\mathscr{O\!A}(2,2,1)$. The Hilbert basis of $C(2,2,1)$, defined by the linear constraints of Eq.~\eqref{SOE21}, consists of the following two generating OAs
\be\label{OA221}
\text{OA$^{(1)}_{2,2,1}$}=\ba{cc}
0&0\\
1&1
\ea\;,\qquad\qquad
\text{OA$^{(2)}_{2,2,1}$}=\ba{cc}
0&1\\
1&0
\ea\;,
\ee 
with corresponding states 
\begin{equation}\label{gen21}
|g_{2,2,1}^{(1)}\rangle=|00\rangle+|11\rangle\;,\qquad|g_{2,2,1}^{(2)}\rangle=|01\rangle+|10\rangle,
\end{equation}
which, up to normalization, are equivalent to the 
Bell states usually denoted by $\ket{\Phi^+}$ and $\ket{\Psi^+}$. 
Proposition \ref{prop3} implies that any 
array-based state in $\mathcal{O\!A}(2,2,1)$ is a superposition with nonnegative integer coefficients of the generating states $\ket{g_{2,2,1}^{(1)}}$ and $\ket{g_{2,2,1}^{(1)}}$. 

Finally, let us consider the set $\mathscr{O\!A}(3,2,2)$. Computation of the Hilbert basis of the cone $C(3,2,2)$, defined by the linear constraints \eqref{EQ32}, leads to the arrays
\be\label{oag32}
\text{OA$^{(1)}_{3,2,2}$}=\ba{ccc}
0&0&1\\
0&1&0\\
1&0&0\\
1&1&1
\ea\;,\qquad\qquad
\text{OA$^{(2)}_{3,2,2}$}=\ba{ccc}
0&0&0\\
0&1&1\\
1&0&1\\
1&1&0
\ea\;,
\ee
with  corresponding states
\be
\label{g32}
\ket{g_{3,2,2}^{(1)}} = \ket{001} +\ket{010} + \ket{100}+ \ket{111}
\;,\qquad
\ket{g_{3,2,2}^{(2)}} = \ket{000}+ \ket{011}+ \ket{101}+ \ket{110}\;.
\ee

The full list of generating OAs for systems of up to four qubits is reported in Appendix \ref{app1}. In addition, the Hilbert bases associated to $\mathscr{O\!A}(5,2,k)$ for $k=1,\dots,4$, as well as for heterogeneous tripartite systems of two qubits and one qutrit or one qubit and two qutrits, are available online \cite{HBonline}. All bases were calculated via the open source software \emph{Normaliz} \cite{bruns2010normaliz} on a 2.9 GHz Intel Core i5 Processor with 8 GBs of memory. The elapsed time for the largest basis (corresponding to the case of five qubits and $k=2$) was 56 seconds. 

While there is in general no apparent regularity in the Hilbert bases corresponding to different values of $N$, $d$ and $k$, for maximum strength, i.e.~$k=N-1$, the following conjecture can be formulated:
\begin{conje}\label{conjecture}
Consider the set of orthogonal arrays $\mathscr{O\!A}(N,2,N-1)$. For every $N$, the Hilbert basis of the rational cone $C(N,2,N-1)$ consists of the following two states:
\be
\ket{g_{N,2,N-1}^{(1)}}=\underbrace{\mathbb I \otimes \dots \otimes \mathbb I}_\text{$N-1$ times}\otimes \,\sigma_x\,\ket{g_{N,2,N-1}^{(2)}}\;,\qquad \ket{g_{N,2,N-1}^{(2)}} = \underbrace{H\otimes \dots\otimes H}_\text{$N$ times}\,\ket{GHZ_N}\;.
\ee
\end{conje}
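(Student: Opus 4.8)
The plan is to diagonalize the orthogonality constraints \eqref{LC} by passing to the discrete Walsh--Hadamard basis, exactly as encoded by the $J$-characteristics of Section \ref{S2}. For $d=2$ the map $s\mapsto\omega_s$ reduces to $a\mapsto(-1)^a$, so for a column-set $I$ the $J$-characteristic is the absolute value of the Walsh coefficient $\hat c(I)=\sum_{x\in\{0,1\}^N}(-1)^{\langle I,x\rangle}c_x$, where $\langle I,x\rangle=\sum_{j\in I}x_j\bmod 2$. First I would show that the constraints \eqref{LC}, which assert uniformity of the marginal counts over every choice of $k=N-1$ columns, are equivalent to the vanishing $\hat c(I)=0$ for all $I$ with $1\le|I|\le N-1$. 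This is the standard character-theoretic restatement of the strength-$k$ property: uniformity of the marginal on a column-set $A$ is equivalent to $\hat c(I)=0$ for every nonempty $I\subseteq A$, and ranging over all $A$ with $|A|=N-1$ selects precisely the coefficients of order $1$ through $N-1$, leaving only $\hat c(\emptyset)$ and $\hat c([N])$ unconstrained.

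Next I would invert the Fourier transform. Since only $\hat c(\emptyset)=\sum_x c_x$ and $\hat c([N])=\sum_x(-1)^{|x|}c_x$ survive (with $|x|$ the Hamming weight), inversion gives
\be
c_x=\frac{1}{2^N}\bigl(\hat c(\emptyset)+(-1)^{|x|}\hat c([N])\bigr),
\ee
so $c_x$ depends on $x$ only through the parity of $|x|$. Writing $a$ for the common value on even-weight strings and $b$ for that on odd-weight strings, the solution space of \eqref{LC} is exactly two-dimensional, coordinatized by $(a,b)$. Intersecting with the positive orthant of $\mathbb R^{2^N}$ imposes precisely $a\ge0$ and $b\ge0$, so the rational cone $C(N,2,N-1)$ is the first quadrant $\{(a,b):a,b\ge0\}$ under this embedding.

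It then remains to compute the Hilbert basis. An integer point of $C(N,2,N-1)$ has all even-weight coordinates equal to a single integer $a$ and all odd-weight coordinates equal to a single integer $b$, so the lattice of integer solutions is $\mathbb Z^2$ under $(a,b)$, with extreme rays generated by the primitive vectors $(1,0)$ and $(0,1)$. These generators form a lattice basis, so the cone is unimodular and its Hilbert basis is exactly $\{(1,0),(0,1)\}$. Translating back, $(1,0)$ is the array of all even-weight strings, with state $\sum_{|x|\text{ even}}\ket x$, and $(0,1)$ is the array of all odd-weight strings, with state $\sum_{|x|\text{ odd}}\ket x$. A direct computation with $H\ket0=\ket0+\ket1$, $H\ket1=\ket0-\ket1$ gives $H^{\otimes N}\ket{GHZ_N}=2\sum_{|x|\text{ even}}\ket x$, identifying the even-weight state with $\ket{g_{N,2,N-1}^{(2)}}$; applying $\mathbb I^{\otimes(N-1)}\otimes\sigma_x$ flips the last bit, hence the parity, yielding the odd-weight state $\ket{g_{N,2,N-1}^{(1)}}$, in agreement with \eqref{gen21} and \eqref{g32}.

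The genuinely substantive step is the first one: establishing that the strength-$(N-1)$ constraints are equivalent to the vanishing of all Walsh coefficients of order $1$ through $N-1$, whence the solution space collapses to dimension two. Everything downstream --- the shape of the cone, its unimodularity, and the identification of the two generators with the conjectured states --- is then routine. I expect no real obstacle beyond carefully handling the character sums; the one point to watch is to confirm that both parities are actually realized for every $N\ge2$, so that the solution space is honestly two-dimensional rather than degenerating to a ray.
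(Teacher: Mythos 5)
The statement you are addressing is posed in the paper only as a \emph{conjecture}, supported by machine computation of the Hilbert bases up to $N=10$ and listed among the open problems in the concluding section; there is no proof in the paper to compare against. Your argument is correct and, as far as I can tell, settles it. The one genuinely substantive step is exactly the one you flag: for $d=2$ the strength-$(N-1)$ conditions \eqref{LC} are equivalent, via the identity $\hat m_A(I)=\hat c(I)$ for $I\subseteq A$ (where $m_A$ is the marginal count on the column set $A$) together with Fourier inversion on each marginal, to the vanishing of all Walsh coefficients $\hat c(I)$ with $1\le|I|\le N-1$; this is the two-sided version of the paper's one-sided remark that all $J$-characteristics of order up to $k$ vanish for an array of strength $k$. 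Once the solution space collapses to the two parity coordinates $(a,b)$, the cone is a quadrant, the two generators have disjoint supports in $\mathbb Z^{2^N}$, so the monoid of lattice points is free on them and the Hilbert basis is forced. Two small points are worth recording. First, as you note, $H^{\otimes N}\ket{GHZ_N}=2\sum_{|x|\,\mathrm{even}}\ket{x}$: taken literally, the conjectured generator corresponds to the even-parity array with every row repeated twice, which has index $\lambda=2$ and is decomposable, hence not actually in the Hilbert basis; the intended generator is the index-one array $\sum_{|x|\,\mathrm{even}}\ket{x}$, consistent with Eqs.~\eqref{gen21} and \eqref{g32}, so the conjecture should be read modulo this overall factor of $2$, which is exactly how your identification treats it. Second, your closing worry is vacuous: both parity classes of $\{0,1\}^N$ are nonempty for every $N\ge 1$, so the solution space never degenerates to a ray.
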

Here, $\mathbb{I}_2$ denotes the identity matrix of size 2,  $\sigma_x=\bigl( \begin{smallmatrix}0 & 1\\ 1 & 0\end{smallmatrix}\bigr)$ is the  Pauli  shift matrix, and $H$ is the unnormalized Hadamard gate, $H=\bigl( \begin{smallmatrix}1 & 1\\ 1 & -1\end{smallmatrix}\bigr)$. This conjecture is supported by analytic computation of the Hilbert bases associated to the respective cones up to $N=10$.

\begin{figure}
\includegraphics[width=0.65\textwidth]{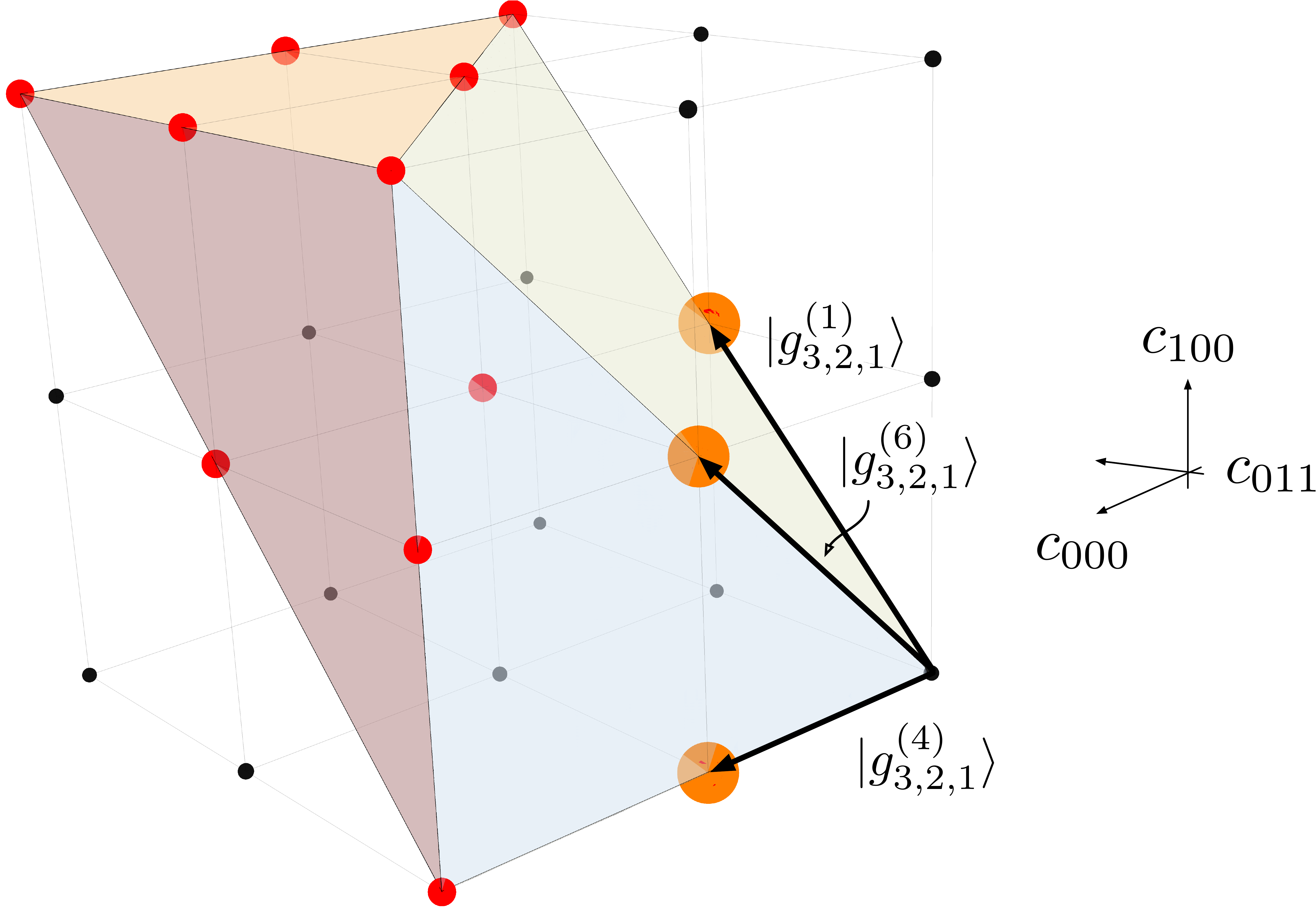}
\caption{\emph{(color online)} Three-dimensional projection of the convex polytope $ P(3,2,1)$. The projection is obtained by imposing the constraint $c_{001}=c_{010}=0$ and restricting the polytope to the box $[0,2]^{\times 3}$. It contains three of the six generating states of the corresponding Hilbert basis, shown in orange \emph{(larger points)}. Red \emph{(medium)} points define quantum states associated to OAs that are convex superpositions of the generating states. Black \emph{(smaller)} points in the lattice are outside the cone, and thus are not associated to any orthogonal array.}
\label{Fig1}
\end{figure}

We conclude this section with an application to the theory of orthogonal arrays. The previous results can be used to prove nonexistence of some classes of OAs in a direct way. For instance, by exploring the Hilbert basis of the cone $C(4,2,2)$, as reported in Appendix \ref{app1}, one realizes that all generating OAs with $N=4$ and $k=2$ over a binary alphabet have at least 8 runs. Therefore, OA($4,4,2,2$) does not exist, which is equivalent to saying that two orthogonal Latin squares of size 2 do not exist. One can reach the same conclusion without the necessity to derive the full Hilbert basis. That is, from the system of linear constraints defining the cone $C(4,2,2)$ one can, without loss of generality, assume that $c_{0000}=1$, so that by inversion one finds $c_{1111} =(r-6)/2-c_{0001}-c_{0010}-c_{0100}-c_{1000}$. But then, for $r=4$, the coefficient $c_{1111}$ would be negative, which is impossible. In a similar way, one could  prove that an OA($36,4,6,2$), or two orthogonal Latin squares of size six, do not exist. This statement is equivalent to the fact that the famous problem of Euler \cite{euler1782recherches} concerning 36 officers has no solution.

\section{Entanglement classification for array-based states}
\label{S4}
In this section, we introduce the problem of entanglement classification in the set
 $\mathcal{O\!A}(N,d,k)$. The first issue is to define the class of \emph{free operations}. 
In the Hilbert space $\mathcal H^{\otimes N}_d$, one usually considers the so called SLOCC class, which corresponds to the group of local invertible transformations, $\text{GL($d$)}^{\otimes N}$. However, in general, a local invertible transformation is not OA-preserving, i.e.~it does not map an array-based state into another array-based state.
\begin{prop}\label{prop4}
The most general OA-preserving transformation is the local composition of transformations represented by invertible integer stochastic matrices.
\begin{proof}
Consider an invertible transformation acting on one party only. This implies that, if $\ket{\psi_{N,d,k}}$ is an array-based quantum state, it is transformed to $T_{A_1}\ket{\psi_{N,d,k}} \vcentcolon= (A_1\otimes \mathbb I_d \otimes \dots \otimes \mathbb I_d)\ket{\psi_{N,d,k}}$, with $A_1\in \text{GL($d_i$)}$
and  without loss of generality the transformation is applied to the first party.
For the transformation $T_{A_1}$ to be OA-preserving, any two entries of $A_1$ must be in rational proportions. Since states are unnormalized, we may assume that all entries are integer. If some of the entries are negative, then $T_{A_1} \ket{\psi_{N,d,k}}$ is in general not array-based, thus we may further assume that all entries are nonnegative. Let us denote by $\alpha_{ij}\in \mathbb N_0$ the entries of $A_1$, with $i,j \in \{0,\dots,d-1\}$. Then, by construction,
\be\label{ta1trans}
T_{A_1}\ket{j}\otimes \ket{\phi} = \sum_{i=0}^{d-1} \alpha_{ij} \ket{i}\otimes \ket{\phi}\;,
\ee 
for arbitrary $\ket{\phi}\in \mathcal H_d^{\otimes N-1}$. Assuming that $T_{A_1}$ is OA-preserving implies that
\be
\sum_{j=0}^{d-1} \alpha_{i_1 j}= \sum_{j=0}^{d-1} \alpha_{i_2j}\;,
\ee
for any choice of row indices $i_1$ and $i_2$, 
so each symbol $s\in\mathscr A_d$ appears equally often along the first column of the array corresponding to $T_{A_1}\ket{\psi_{N,d,k}}$. Moreover, it is also necessary that each symbol appears equally often in any other column, which requires that
\be
\sum_{i=0}^{d-1} \alpha_{i j_1}= \sum_{i=0}^{d-1} \alpha_{ij_2}\;,
\ee
for any choice of column indices $j_1$ and $j_2$. In conclusion, $A_1$ must have nonnegative integer entries that sum to the same constant $c$ along either a row or a column, i.e.~$A_1$ is an integer stochastic matrix or a generalized magic square with magic constant $c$ \cite{stanley2007combinatorics}. In turn, if $A_1$ is an integer stochastic matrix, then $T_{A_1} \ket{\psi_{N,d,k}}$ is array-based, since the corresponding array is an OA with strength $k$ at least $1$. 
The most general OA-preserving transformation is obtained by composition, i.e.~it is of the form $\sbigotimes_j\!T_{A_j}$.
\end{proof}
\end{prop}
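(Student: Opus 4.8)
The plan is to reduce the problem to a single party, characterize the admissible single-party maps, and then reconstitute the general transformation as a tensor product. Since any element of $\text{GL($d$)}^{\otimes N}$ factorizes over the parties, it suffices to determine which single-party maps $T_{A_1}=A_1\otimes\mathbb I_d\otimes\dots\otimes\mathbb I_d$, with $A_1\in\text{GL($d$)}$, are compatible with being array-based, and then to observe that the general OA-preserving transformation, being local, is a tensor product $\sbigotimes_j T_{A_j}$ of such maps. Writing the entries of $A_1$ as $\alpha_{ij}$, the action on the tensor coefficients of an array-based state is $c'_{i,i_2\cdots i_N}=\sum_{j}\alpha_{ij}\,c_{j,i_2\cdots i_N}$, and the requirement is that $c'$ again be the coefficient array of an orthogonal array whenever $c$ is.

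First I would establish rationality and integrality. The coefficients $c$ range over nonnegative integer points of the cone cut out by the orthogonality relations of Proposition~\ref{prop1}, and each output $c'$ must again be a nonnegative integer point. By feeding in a sufficiently rich family of genuine array-based states and comparing outputs, one forces the entries $\alpha_{ij}$ to stand in rational proportion; rescaling (legitimate, since the states are unnormalized) then makes them integers. Negative entries are excluded next, because for a suitable admissible input they would produce a negative coefficient $c'$, and negative multiplicities cannot occur in an array. This pins $A_1$ down to a matrix with nonnegative integer entries.

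Next I would impose orthogonality of the output. Summing the transformed coefficients over all but the first index yields the symbol frequencies in the first column of the output array, namely $n\sum_{j}\alpha_{ij}$, where $n$ is the common per-symbol frequency of the balanced input; equality of these for all $i$ forces the row sums of $A_1$ to coincide. For any other column, the output frequencies are $\sum_{j}\big(\sum_i\alpha_{ij}\big)m_{j}$, with $m_{j}$ the relevant two-column joint counts of the input, so by selecting strength-one inputs whose first-to-other-column correlations are suitably unbalanced one forces the column sums $\sum_i\alpha_{ij}$ to be equal as well. Hence $A_1$ is a generalized magic square, that is, an integer stochastic matrix with a common magic constant $c$. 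For the converse, a nonnegative integer matrix with equal row and column sums maps any balanced array to a balanced array, so its image is an orthogonal array of strength at least one; intersecting this condition with the invertibility demanded by $\text{GL($d$)}$ leaves exactly the invertible integer stochastic matrices, and taking tensor products over the parties recovers the stated form.

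The main obstacle is the combined rationality-and-column-sum step. Because array-based states are tied together by the orthogonality relations, one cannot excite individual tensor coefficients independently, so every test state used to probe a single entry $\alpha_{ij}$ or a single column sum must itself be a legitimate orthogonal array. The crux is therefore to exhibit, for each entry and each column, an explicit family of admissible input arrays whose transformed images isolate that entry (respectively that column sum), thereby upgrading the pointwise constraints to the global magic-square condition without ever leaving the array-based set.
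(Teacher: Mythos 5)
Your proposal is correct and follows essentially the same route as the paper's own proof: reduce to a single-party map, force rationality, integrality and nonnegativity of the entries, derive equality of row sums from balance of the first column and equality of column sums from balance of the remaining columns, note the converse for integer stochastic matrices, and recover the general case by tensoring over parties. The only difference is that you explicitly flag the need for test arrays that isolate individual entries and column sums — a point the paper's proof simply asserts without elaboration — so your version is, if anything, slightly more candid about where the remaining work lies.
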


As an example, let us consider the case of qubits, that is $d=2$. If $A_j$ denotes a general invertible matrix,
\be
A_j = \begin{pmatrix}
\alpha_{00} & \alpha_{01} \\
\alpha_{10} & \alpha_{11}
\end{pmatrix}\;,\qquad \alpha_{00}\alpha_{11} - \alpha_{01}\alpha_{10} \neq 0\;,
\ee
with $\alpha_{ij} \in \mathbb N_0$, for $A_j$ to be integer stochastic, one must have:
\be
\begin{split}
\alpha_{00} + \alpha_{10} &= \alpha_{01} + \alpha_{11}\;,\\
\alpha_{00} + \alpha_{01} &= \alpha_{10} + \alpha_{11}\;,\\
\end{split}
\ee
which implies $\alpha_{00}=\alpha_{11}\eqd\alpha$ and $\alpha_{01}=\alpha_{10}\eqd \beta$, with $\alpha\neq \beta$. Thus, the most general OA-preserving transformation for qubits is the composition of transformations $T_{A_j}$, with 
\be
A_j = \begin{pmatrix}
\alpha & \beta \\
\beta & \alpha
\end{pmatrix}\;,\qquad \alpha \neq \beta\;.
\ee
For instance, if $\ket{\psi_{N,2,k}}$ is the state based on OA($r,N,2,k$), then 
\be
\ket{\psi_{N,2,k}} \to T_{A_1} \ket{\psi_{N,2,k}} = \alpha \ket{\psi_{N,2,k}} + \beta\,(\sigma_x \otimes \mathbb I_2 \dots \otimes \mathbb I_2) \ket{\psi_{N,2,k}}\;,
\ee
or, at the level of arrays,
\be\label{trOA}
\text{OA($r,N,2,k$)}  \to \alpha\,\text{OA($r,N,2,k$)} \oplus \beta\, f_1[\text{OA($r,N,2,k$)}]\;,
\ee
where $f_j[OA]$ is obtained from $OA$ by permuting symbols along the $j^{\text{th}}$ column. It is clear that the transformation \eqref{trOA} does not change the strength of the orthogonal array it is applied to. In fact, the following more general proposition holds:
\begin{prop}\label{kprop}
Any two OAs, which are pre-images with respect to the map $\tau$ 
of quantum states connected by an OA-preserving transformation, have the same strength.
\begin{proof}
It is enough to check that, if $\ket{\psi_{N,d,k}}$ is based on OA($r,N,d,k$), then $T_{A_1}\ket{\psi_{N,d,k}}$ is based on an OA having the same strength $k$. 
Here $A_1$ represents an integer stochastic matrix, 
with entries $\alpha_{ij}\in\mathbb N_0$ and magic constant $c=\sum_{i=0}^{d-1}\alpha_{ij}=\sum_{j=0}^{d-1} \alpha_{ij}$. For convenience, we introduce the following notation. Given a multi-index $I=j_1j_2\dots j_k$ with $|I|=k$, let  $\pi_I$ be
 the linear operator that acts on basis kets as follows:
 $\pi_I \ket{i_1 i_2\dots i_N}=\ket{i_{j_1} i_{j_2}\dots i_{j_k}}$.
 For instance, $\pi_{23}\ket{0123}=\ket{12}$. 
Furthermore, if $s\in\mathscr A_d$, then $\zeta_s$ is the linear operator that acts on basis kets by $\zeta_s\ket{i_1 i_2\dots i_N}=\delta_{s,i_1}\ket{i_2\dots i_N}$. For instance, $\zeta_0 \ket{0123} = \ket{123}$, but $\zeta_1 \ket{0123}=0$. 

The fact that $\ket{\psi_{N,d,k}}$ is based on an OA of strength $k$ is equivalent to the fact that, for any $I$ with $|I|=k$, 
\be\label{p1}
\pi_I\ket{\psi_{N,d,k}}=\lambda\, \tau(\mathscr F_{k,d})\;, 
\ee
i.e.~the projection of OA($r,N,d,k$) to any $k$ columns is a multiple of the full factorial design $\mathscr F_{k,d}$, with $\lambda$ being the \emph{index} of OA($r,N,d,k$). Moreover, it also holds that, for any $I$ with $|I|=k-1$, and any $s\in\mathscr A_d$,
\be\label{p2}
\pi_I  \circ \zeta_s \ket{\psi_{N,d,k}} = \lambda\, \tau(\mathscr F_{k-1,d})\;.
\ee 

We have to check that $T_{A_1}\ket{\psi_{N,d,k}}$ is based on an OA of strength $k$. Write $\ket{\psi_{N,d,k}}$ in the following form, 
\be
\begin{split}
\ket{\psi_{N,d,k}} =& \sum_{l=1}^{r/d} \ket{0}\otimes \ket{\phi_{0l}} + \sum_{l=1}^{r/d} \ket{1}\otimes \ket{\phi_{1l}} + \dots +  \sum_{l=1}^{r/d} \ket{d-1}\otimes \ket{\phi_{d-1,l}} 
=  \sum_{l=1}^{r/d} \sum_{j=0}^{d-1} \ket{j}\otimes \ket{\phi_{jl}}\;, 
\end{split}
\ee
where the $\ket{\phi_{jl}}$ are basis kets. Then, from Eq.~\eqref{ta1trans}, it follows that
\be\label{ta1state}
T_{A_1}\ket{\psi_{N,d,k}} = \sum_{l=1}^{r/d} \sum_{j=0}^{d-1} \sum_{i=0}^{d-1} \alpha_{ij} \ket{i} \otimes \ket{\phi_{jl}}\;.
\ee
Suppose $k$ is at least $2$ (otherwise the statement is trivial). We have to check that for every $I$ with $|I|=2$, the state
$\pi_I\ket{\psi_{N,d,k}}$ is a multiple of $\tau(\mathscr F_{2,d})$. For instance, for $I=23$
one has 
\be
\begin{split}
\pi_{23} \circ T_{A_1}\, \ket{\psi_{N,d,k}} = & \sum_{l=1}^{r/d} \sum_{j=0}^{d-1}\sum_{i=0}^{d-1} \alpha_{ij}\, \pi_{23}(\ket{i}\otimes \ket{\phi_{jl}})
= \sum_{l=1}^{r/d} \sum_{j=0}^{d-1} \left(\sum_{i=0}^{d-1} \alpha_{ij}\right) \pi_{12}\ket{\phi_{jl}}\\
= &\, c \sum_{l=1}^{r/d} \sum_{j=0}^{d-1} \pi_{12}\ket{\phi_{jl}} 
= \, c\, \pi_{23} \ket{\psi_{N,d,k}} \\
= & \, \lambda c\, \tau(\mathscr F_{2,d})\;.
\end{split}
\ee
It is clear that the same holds for any choice of $I$ not involving the first column. Consider instead $I=12$; then,
\begin{align*}
\pi_{12} \circ T_{A_1}\, \ket{\psi_{N,d,k}} = & \sum_{l=1}^{r/d} \sum_{j=0}^{d-1}\sum_{i=0}^{d-1} \alpha_{ij}\, \pi_{12}(\ket{i}\otimes \ket{\phi_{jl}})
= \sum_{l=1}^{r/d} \sum_{j=0}^{d-1} \sum_{i=0}^{d-1} \alpha_{ij}\ket{i}\otimes \pi_1 \ket{\phi_{jl}}\\
= & \sum_{i=0}^{d-1} \sum_{j=0}^{d-1} \alpha_{ij}\ket{i}\otimes \pi_1\left(\sum_{l=1}^{r/d} \ket{\phi_{jl}} \right)
=  \sum_{i=0}^{d-1} \sum_{j=0}^{d-1} \alpha_{ij}\ket{i}\otimes \pi_1 \circ \zeta_j \ket{\psi_{N,d,k}} \tag{\stepcounter{equation}\theequation} \\
= &\, \lambda\, \sum_{i=0}^{d-1} \left(\sum_{j=0}^{d-1}\alpha_{ij}\right) \,\ket{i}\otimes \tau(\mathscr F_{1,d})
= \, \lambda c \sum_{i=0}^{d-1} \ket{i} \otimes \tau(\mathscr F_{1,d})\\
= &\, \lambda c\, \tau(\mathscr F_{2,d})\;.
\end{align*}
The same holds for any choice of $I$ involving the first column and any other column. 

If $k$ were equal to 2, this would conclude the proof. If $k>2$, it is enough to notice that the previous computations rely on the two properties \eqref{p1} and \eqref{p2}, which hold for any $k$.
\end{proof}
\end{prop}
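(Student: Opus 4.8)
The plan is to reduce the claim to a single factor and then verify the strength condition directly on the column-marginals of the transformed state. By Proposition~\ref{prop4} every OA-preserving transformation factorizes as a tensor product $\sbigotimes_j T_{A_j}$ of single-party maps, each $A_j$ being an integer stochastic (magic) matrix with some magic constant $c$; composing one factor at a time, it therefore suffices to show that a single $T_{A_1}$ sends a state of strength $k$ to a state of strength $k$. The key reformulation I would use is that an array-based state has strength $k$ exactly when, for every choice $I$ of $k$ columns, the marginal obtained by summing out all columns outside $I$ is a fixed multiple $\lambda$ of the uniform full-factorial superposition $\tau(\mathscr F_{k,d})$, where $\lambda$ is the index. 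I would also record the companion fact, a direct consequence of strength $k$, that fixing the symbol in the first column to any value $s$ and then taking the marginal on any $k-1$ of the remaining columns again produces $\lambda\,\tau(\mathscr F_{k-1,d})$: indeed the first column together with those $k-1$ columns is a $k$-column set, so each combination on them occurs $\lambda$ times, and conditioning on first $=s$ leaves each of the remaining $(k-1)$-combinations occurring $\lambda$ times.

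Writing $\ket{\psi_{N,d,k}}$ with the first factor split off, Eq.~\eqref{ta1trans} gives $T_{A_1}\ket{\psi_{N,d,k}}=\sum_{i,j}\alpha_{ij}\,\ket{i}\otimes\big(\text{sum of runs with first symbol }j\big)$, and I would test strength by projecting onto an arbitrary $k$-column set $I$ in two cases. When $I$ does not contain the first column, the projection commutes past the action on the first party and the first-column coefficients collapse through the column-sum relation $\sum_i\alpha_{ij}=c$, so the marginal equals $c$ times that of $\ket{\psi_{N,d,k}}$, namely $\lambda c\,\tau(\mathscr F_{k,d})$. When $I$ does contain the first column, I would regroup the double sum so that summing over the runs with first symbol $j$ reconstructs the conditioned state, apply the companion fact above to the other $k-1$ columns of $I$ to pull out $\lambda\,\tau(\mathscr F_{k-1,d})$, and then use the row-sum relation $\sum_j\alpha_{ij}=c$ to recombine into $\lambda c\,\tau(\mathscr F_{k,d})$. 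In either case the $k$-column marginal is the required uniform multiple, so $T_{A_1}\ket{\psi_{N,d,k}}$ has strength $k$.

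To strengthen ``strength is preserved in the forward direction'' to the stated equality of strengths, I would note that both computations used only the magic-constant relations, equal row sums and equal column sums, and made no use of nonnegativity or integrality of the entries of $A_1$. Since $A_1\mathbf 1=c\,\mathbf 1$ and $\mathbf 1^{\!\top}A_1=c\,\mathbf 1^{\!\top}$, the inverse $A_1^{-1}$ satisfies the same two relations with constant $1/c$, so the identical argument shows that $T_{A_1}^{-1}$ also preserves the marginal-uniformity condition; applying the result to $T_{A_1}$ and to its inverse forces the strengths of $\ket{\psi_{N,d,k}}$ and $T_{A_1}\ket{\psi_{N,d,k}}$ to coincide, and composition over all parties gives the proposition. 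I expect the case in which $I$ meets the transformed first column to be the main obstacle: there the projection does not simply factor through the single-party action, and one must combine the conditioned-marginal property with the row-sum relation to see that the genuine mixing of symbols induced by $A_1$ on the first column rearranges the runs without disturbing the uniformity of any $k$-column marginal.
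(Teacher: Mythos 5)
Your proposal is correct and follows essentially the same route as the paper's proof: the same reformulation of strength $k$ as uniformity of all $k$-column marginals (properties \eqref{p1} and \eqref{p2}), the same reduction to a single factor $T_{A_1}$, and the same two-case analysis according to whether the chosen column set contains the transformed party, using the column-sum relation in one case and the conditioned marginal plus the row-sum relation in the other. Your additional observation that the identical computation applies to $A_1^{-1}$ (which satisfies the same row- and column-sum relations with constant $1/c$), so that the maximal strength cannot increase either, is a worthwhile refinement that the paper leaves implicit when it asserts equality rather than mere one-directional preservation of the strength.
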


Thus, OA-preserving transformations do not change the strength $k$. Moreover, they can only increase the generalized resolution of the corresponding array:
\begin{prop}\label{grprop}
If a quantum state $|\psi\rangle$  is obtained from another array-based state $|\phi\rangle$
 by an OA-preserving transformation, then the OA corresponding to  $|\psi\rangle$ 
has generalized resolution not smaller than the array corresponding to  $|\phi\rangle$.
\begin{proof}
Define $\Omega$ to be the linear functional which acts on basis kets as follows: $\Omega \ket{i_1i_2 \dots i_n}= \omega_{i_1} \omega_{i_2} \dots \omega_{i_n}$, where as before $\omega_s= \text{exp}(2\pi i s/d)$ for $s\in\mathscr A_d$. Given the array-based state $\ket{\Psi_{N,d,k}}$, the $J$-characteristic $J_n(I)$ of the OA $\tau^{-1}\ket{\Psi_{N,d,k}}$ with multi-index $I$ can be written as
\be
J_n(I) = \left| \Omega \circ \pi_I \ket{\psi_{N,d,k}}\right|\;.
\ee  
Suppose the transformation $T_{A_1}$ is applied to $\ket{\psi_{N,d,k}}$; if our statement holds in this particular case, it also holds for the most general OA-preserving transformation. The $J$-characteristic of the OA $\tau^{-1} \circ T_{A_1} \ket{\psi_{N,d,k}}$ with multi-index $I$ is denoted by $\tilde J_n(I)$. There are two cases: either $I$ includes the first index or not. If not, from Eq.~\eqref{ta1state}, it follows that:
\be\label{case1jn}
\begin{split}
\tilde J_n(I) =& \left|\Omega \circ  \pi_I \circ T_{A_1} \ket{\psi_{N,d,k}}\right| = \left| \sum_{l=1}^{r/d} \sum_{j=0}^{d-1}\left(\sum_{i=0}^{d-1}\alpha_{ij}\right)\Omega \circ \pi_I \ket{\phi_{jl}}\right| \\
= &\,c\,\left| \Omega \circ \pi_I \ket{\psi_{N,d,k}}\right| = c \, J_n(I)\;.
\end{split}
\ee 
We conclude that any $J$-characteristic $\tilde J_n(I)$, with $I$ not involving the first column, is rescaled by $c$, the magic constant of the matrix $A_1$. 

Now suppose $I$ includes the first index, so that it may be written as $I = 1\cdot I'$, where $|I'|=n-1$. Then,
\be
\begin{split}
\tilde J_n(I) =& \left| \sum_{l=1}^{r/d} \sum_{j=0}^{d-1}\sum_{i=0}^{d-1}\alpha_{ij}\, \omega_i\, \Omega \circ \pi_{I'} \ket{\phi_{jl}}\right| 
= \left|\sum_{j=0}^{d-1} \left(\sum_{i=0}^{d-1}\omega_i\,\alpha_{ij}\right) \left(\sum_{l=1}^{r/d} \Omega \circ \pi_{I'} \ket{\phi_{jl}}\right)\right| \\
=&\, c \left|\sum_{i,j=0}^{d-1} \omega_i \beta_{ij} z_j \right|\;,
\end{split}
\ee 
where we defined
\be\label{jtilden}
\beta_{ij} \eqd \frac{\alpha_{ij}}{c}\;, \qquad\qquad  z_j \eqd \sum_{l=1}^{r/d} \Omega \circ \pi_{I'} \ket{\phi_{jl}}\;.
\ee
Notice that $\beta_{ij}$ is a bistochastic matrix of order $d$. In matrix notation, we may rewrite the summation on the right hand side of Eq.~\eqref{jtilden} as $\pmb{\omega}^T \beta \mathbf{z}$, where $\pmb{\omega}$ and $\mathbf{z}$ are complex column vectors and $\bullet^{T}$ denotes the
matrix transposition. By Birkhoff theorem, the set of bistochastic matrices 
is given by the convex hull of all permutations of given size.
Hence we can represent an arbitrary bistochastic matrix as a combination
of permutation matrices $\sigma_q$. In particular one can write, 
$\beta = \sum_{q} \theta_q \sigma_q$, where the positive weights $\theta_q$ sum to unity, 
%$\theta_q >0$, 
$\sum_q \theta_q = 1$. Then,
\be
\tilde J_n(I) = c \left| \sum_q \theta_q\, \pmb{\omega}^T \sigma_q \mathbf{z} \right| \leq c \sum_q \theta_q\,|\pmb{\omega}^T \sigma_q \mathbf{z}|\;.
\ee
Notice that $|\pmb{\omega}^T \sigma_q \mathbf{z}|$ is the $J$-characteristic, for the same multi-index $I$, of the OA $\tau^{-1} \circ T_{\sigma_q^T} \ket{\psi_{N,d,k}}$ (which is isomorphic to the OA $\tau^{-1} \ket{\psi_{N,d,k}}$). This implies that 
\be
\tilde J_n(I) \; \leq \; c \mu\; ,\qquad \quad \text{with} \qquad \mu \eqd \underset{\sigma}{\text{max}}\, |\pmb{\omega}^T \sigma_q \mathbf{z}|\;.
\ee
We conclude that any $J$-characteristic $\tilde J_n(I)$, with $I$ involving the first column, is less or equal to $c$ times the maximum of the $J$-characteristics, for the same multi-index $I$, of OAs obtained from $\tau^{-1} \ket{\psi_{N,d,k}}$ by a permutation of symbols in the first column. 

In particular, our results imply that: if $J_n(I)=0$, with $I$ not involving the first index, then also $\tilde J_n(I)=0$; if $I$ involves the first index and $\mu=0$, then also $\tilde J_n(I)=0$. This in turn means that the quantity $t$ appearing in definition \eqref{GRdef} can never decrease under OA-preserving transformations. In particular, if it increases, then, recalling that $t<GR<t+1$, also the generalized resolution increases. If it does not change, then one has to consider the maximum of the set of all $J$-characteristics of order $t$, taken over all possible multi-index $I$ with $|I|=t$ and permutations of symbols within columns. 

There are again two cases. Suppose that the multi-index achieving the maximum does not involve the first index. Then, from Eq.~\eqref{case1jn}, $J_t^{(\text{max})}$ gets rescaled by 
$c$. However, since the number of rows is also rescaled by the same factor, 
the generalized resolution remains unchanged.
Suppose instead that the multi-index achieving the maximum does not involve the first index. Then, we only know that $J_t^{(\text{max})}$ can not increase and, as a result, the resolution  \eqref{GRdef} can not decrease.
\end{proof}
\end{prop}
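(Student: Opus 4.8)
The plan is to leverage the explicit classification of OA-preserving maps from Proposition~\ref{prop4}: every such transformation is a tensor product $\sbigotimes_j T_{A_j}$ of local maps, each $A_j$ an invertible integer stochastic matrix with some magic constant $c$. Because the generalized resolution is assembled from $J$-characteristics, and a single local factor touches at most one column of each multi-index, it suffices to analyze one factor $T_{A_1}$ acting on the first party; since $T_{A_1}$ preserves array-basedness and strength (Proposition~\ref{kprop}), the general statement then follows by composing factors one at a time. The first concrete step is to put every $J$-characteristic in operator form: introducing the linear functional $\Omega$ sending a basis ket $\ket{i_1\cdots i_n}$ to $\omega_{i_1}\cdots\omega_{i_n}$, together with the coordinate projection $\pi_I$, one has $J_n(I)=|\Omega\circ\pi_I\ket{\psi}|$. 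This reduces the whole problem to comparing the scalars $\Omega\circ\pi_I\circ T_{A_1}\ket{\psi}$ with $\Omega\circ\pi_I\ket{\psi}$.

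Next I would split into two cases according to whether the multi-index $I$ contains the first column. When it does not, the action of $A_1$ contributes only the column sum $\sum_i\alpha_{ij}=c$, so such characteristics are merely rescaled: $\tilde J_n(I)=c\,J_n(I)$. When $I$ does contain the first column, setting $\beta_{ij}=\alpha_{ij}/c$ turns $\beta$ into a bistochastic matrix and recasts the transformed characteristic as $c\,|\pmb{\omega}^T\beta\,\mathbf{z}|$ for a suitable complex vector $\mathbf{z}$. The key device here is Birkhoff's theorem: decomposing $\beta=\sum_q\theta_q\sigma_q$ as a convex combination of permutation matrices and applying the triangle inequality bounds $\tilde J_n(I)$ by $c\,\mu$, where $\mu\eqd\max_\sigma|\pmb{\omega}^T\sigma_q\mathbf{z}|$ is the largest $J$-characteristic attainable by permuting symbols within the first column, i.e.\ the maximum over the isomorphism class of the original array.

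With these two estimates, the behavior of $t$ (the least order carrying a nonzero characteristic) is immediate: a vanishing $J_n(I)$ with $I$ avoiding the first column stays zero under rescaling, and one with $I$ touching the first column stays zero whenever $\mu=0$. Hence $t$ can never decrease, and if it strictly increases then, since $t<GR<t+1$, the resolution strictly increases. The remaining and most delicate case is $t$ unchanged, where one must compare $\tilde J_t^{(\mathrm{max})}/r'$ against $J_t^{(\mathrm{max})}/r$ while also tracking that the run count itself rescales as $r'=c\,r$. If the maximizing index avoids the first column, numerator and denominator both scale by $c$ and the ratio, hence $GR$, is unchanged; if it involves the first column, the bound $\tilde J_t^{(\mathrm{max})}\le c\,\mu$ together with $r'=c\,r$ shows the ratio cannot increase, so $GR$ cannot decrease. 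The main obstacle I anticipate is precisely this bookkeeping at fixed $t$: one must keep the rescaling $r\to c\,r$ synchronized with the rescaling of the characteristics, and in the second sub-case invoke the definition of $GR$ as a maximum over the isomorphism class so that the permutation-optimized quantity $\mu$ is legitimately compared against the $J_t^{(\mathrm{max})}$ of $\ket{\phi}$.
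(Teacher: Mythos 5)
Your proposal is correct and follows essentially the same route as the paper's proof: reduction to a single local factor $T_{A_1}$, the operator form $J_n(I)=|\Omega\circ\pi_I\ket{\psi}|$, the case split on whether $I$ contains the first column, the rescaling by the magic constant $c$ in one case and the Birkhoff decomposition plus triangle inequality in the other, and the final bookkeeping of $t$ and $J_t^{(\mathrm{max})}$ against the rescaled run count. If anything, your final paragraph states the two sub-cases at fixed $t$ more carefully than the paper's text, which contains a slip in distinguishing them.
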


In light of Prop.~\ref{prop4}, it is natural to consider all OA-preserving transformations, as well as their inverses, to be free. Moreover, relabellings of the local Hilbert spaces are also free operations, since states thus obtained are physically indistinguishable. In conclusion, the class of free operations consists of arbitrary compositions of OA-preserving transformations and their inverses, as well as permutations of the local Hilbert spaces. Two array-based states are equivalent if they can be converted one into the other under free operations, as defined above. By definition, all states equivalent under free operations form an entanglement class in our classification. 

Let us remark that isomorphic OAs are associated to equivalent states. Indeed, given two isomorphic OAs, the transformation connecting their corresponding states is a composition of relabellings of the local Hilbert spaces (exchanges of columns) and local permutations (permutations of symbols within columns), i.e.~it is a free operation. However, it is important to highlight that non-isomorphic OAs could produce equivalent states. For instance, the GHZ state for three qubit systems is based on the array
\be\label{OAGHZ1}
\text{OA($2,3,2,1$)}=\begin{matrix}
0 & 0 & 0 \\
1 & 1 & 1 
\end{matrix}\;.
\ee
This state is equivalent to the state based on the array
\be\label{OAGHZ2}
\text{OA($6,3,2,1$)}=\begin{matrix}
0 & 0 & 0 \\
1 & 0 & 0 \\
1 & 0 & 0 \\
0 & 1 & 1 \\
0 & 1 & 1 \\
1 & 1 & 1 \\
\end{matrix}\;,
\ee
via the OA-preserving transformation $T_{A_1}$ with $A_1=\begin{psmallmatrix}1 & 2 \\ 2 & 1\end{psmallmatrix}$. However, from the point of view of orthogonal arrays theory, arrangements (\ref{OAGHZ1}) and (\ref{OAGHZ2}) are inequivalent.

Having formalized the entanglement classification problem in $\mathcal{O\!A}(N,d,k)$, we collect here a few simple results. Equivalent states in our classification are also equivalent according to the standard SLOCC classification, 
but the opposite does not hold. By Prop.~\ref{kprop}
 the strength $k$ is the same within each class.
Due to Prop.~\ref{grprop}, each entanglement class contains a state such that the corresponding array has the least number of runs, and thus also generalized resolution at least as small as any other array in the same class. Such a state is essentially unique, in the sense that all other states are based on isomorphic arrays. This is because an integer stochastic transformation increases the number of runs by a factor $c$, equal to its magic constant, and an integer stochastic transformation with $c=1$ is a permutation matrix. 
Thus, any two OA-based states in the same entanglement class having 
the same number of runs must be isomorphic. 

For each class, we call such a state a \emph{representative state}. The corresponding OA minimizes the number of runs within each class. Every state based on a generating array is a representative state (but the opposite does not necessarily hold). This is because a generating OA is always indecomposable, by definition, whereas all states which are not representative states are based on OAs which are decomposable. It follows that the states based on non-isomorphic arrays among all generating OAs belong to different entanglement classes. The remaining representative states must be searched for by taking all allowed compositions of the generating OAs and then identifying states connected by free operations; the OA with the least number of runs within each resulting class is the representative state of that class. Because there is only a finite number of allowed compositions, there is also a finite number of classes, for any number of parties $N$ and any local dimension $d$. 

For multipartite systems, there are several possible choices of entanglement measures, even for pure states \cite{HHHH09,EWZ16}. We choose to quantify entanglement by the mean von Neumann entanglement entropy averaged over all possible bipartitions. In other words, entanglement is quantified by the amount of information contained in different subsets of parties. For instance, pure multipartite states that can be prepared by using only local operations and classical communications are fully separable, i.e.~they can be written in tensor product form. For such states, the reduction to any number of parties gives rise to a pure state, which means that the total information in the global state is the sum of the information available to each party. However, for entangled states, this statement does not hold. 
An extreme case is given by the $k$-uniform states, for which all reductions to a fixed number of $k$ parties are maximally mixed. For instance, for the $1$-uniform GHZ state,
 the global state has vanishing von Neumann entropy, 
while the entropy of each single-party reduction is maximal. Therefore, each party has no information about the global state. This remarkable property of quantum states plays a fundamental role in quantum technologies like secure quantum communication \cite{BF02}, quantum cryptography \cite{GRT02} and randomness certification \cite{AM16}.

The array-based entanglement classification just introduced manages to capture many genuinely different types of entanglement. This is because, for any choice of the parameters $N$, $d$ and $k$, it always includes classes having the maximum and minimum amount of entanglement, as well as classes with different intermediate degrees of entanglement. In fact, 
\begin{itemize}
\item[\emph{(i)}] There is always an entanglement class corresponding to the fully separable case, which is based on the full-factorial $\mathscr F_{N,d}$. That is, the OA($d^N,N,d,N$) gives rise to the fully separable $N$-qudits state $|+\rangle^{\otimes N}$, with $\ket{+} = \ket{0} + \ket{1} + \dots +\ket{d-1}$. This state can be prepared by using only local operations and classical communication between the $N$ parties, and thus contains no quantum correlations.
\item[\emph{(ii)}] Irredundant OAs of strength $k$ give rise to $k$-uniform states \cite{goyeneche2014genuinely}. Since every irredundant OA has a number of runs $r<d^N$, all irredundant OAs of strength $k$ are in $\mathscr{O\!A}(N,d,k)$. In particular, states based on IrOA with the maximal strength $k\leq \lfloor N/2\rfloor$ are characterized by the highest possible degree of entanglement.
\end{itemize}

Therefore, the array-based classification covers the maximally entangled (corresponding to irredundant arrays), the separable (full factorial array), as well as several other cases with intermediate entanglement (arrays with any given number of redundancies). In this way, one obtains an approximate picture of the complete classification in $\mathcal H_d^{\otimes N}$. The number of classes based on orthogonal arrays is finite, whereas for quantum systems with $N>3$ parties there exist infinitely many classes \cite{VDMV02}.

Because the generating OAs have typically a smaller number of runs then generic arrays, it is natural, in the search for highly entangled states, to turn to the set $\mathcal G(N,d,k)$ corresponding to generating arrays with $N$ columns, $d$ letters and strength $k$. In fact, the set $\mathcal G(N,d,k)$ contains all $k$-uniform states of minimal support, as stated in the following proposition:

\begin{prop}\label{propMDS}
Every maximum distance separable code gives rise to a quantum state based on a generating OA in the Hilbert basis of $C(N,d,k)$, for some values of the parameters $N$, $d$ and $k$.  
\end{prop}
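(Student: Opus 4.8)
The plan is to reduce the statement to an elementary divisibility argument about the number of runs. By the discussion preceding the proposition, every maximum distance separable code is equivalent to an irredundant array IrOA($d^k,N,d,k$) of index unity, i.e.~to an orthogonal array in $\mathscr{O\!A}(N,d,k)$ with exactly $r=d^k$ runs. Since the generating OAs are, by definition, those elements of $\mathscr{O\!A}(N,d,k)$ represented by points of the Hilbert basis of $C(N,d,k)$, and since the Hilbert basis consists precisely of the indecomposable elements of the monoid of nonnegative integer points of the cone, it suffices to show that an array of index unity cannot be written nontrivially as a composition $B\oplus C$ of two nonempty arrays in $\mathscr{O\!A}(N,d,k)$.

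First I would establish the key quantitative fact: every nonempty orthogonal array in $\mathscr{O\!A}(N,d,k)$ has a number of runs $r$ that is a positive multiple of $d^k$, so in particular $r\geq d^k$. This follows directly from the defining constraints \eqref{LC} together with \eqref{js}. Fixing any $k$ columns, the quantity $\#(j_1\cdots j_k)$ obtained by summing the coefficients over the remaining $N-k$ columns takes one and the same value $\lambda$ for all $d^k$ choices of the $k$-tuple $j_1\cdots j_k$; hence the total number of runs is $r=\sum_{j_1\cdots j_k}\#(j_1\cdots j_k)=\lambda\,d^k$. For a nonzero point of the cone one necessarily has $\lambda\geq 1$, since $\lambda=0$ would force every coefficient to vanish.

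With this in hand the proof concludes at once. Let $A$ be the array associated with a maximum distance separable code, so that $r_A=d^k$, and suppose toward a contradiction that $A=B\oplus C$ with $B$ and $C$ both nonempty arrays in $\mathscr{O\!A}(N,d,k)$. Because $\oplus$ merely stacks runs, $r_A=r_B+r_C$; but by the previous step $r_B\geq d^k$ and $r_C\geq d^k$, whence $d^k=r_A\geq 2d^k$, which is absurd. Therefore $A$ admits no nontrivial $\oplus$-decomposition, so it corresponds to an element of the Hilbert basis of $C(N,d,k)$ and is thus a generating OA.

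I do not expect a genuine obstacle in carrying this out: the heart of the matter is the counting observation that index-unity arrays already achieve the smallest number of runs compatible with strength $k$, and are therefore automatically atoms of the monoid. The only points that require some care are conceptual rather than computational. One must check that the notion of indecomposability supplied by the Hilbert basis (irreducibility with respect to $\mathbb N_0$-linear combinations of lattice points of the cone) coincides with indecomposability with respect to the composition $\oplus$; this is immediate, because $\oplus$ acts on the coefficient vectors as ordinary vector addition. One must also invoke Proposition~\ref{prop2} to ensure that the relevant lattice points genuinely correspond to orthogonal arrays.
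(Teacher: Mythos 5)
Your proof is correct and rests on the same key observation as the paper's: an index-unity (MDS) array has the minimal possible number of runs $r=d^k$, while the index/run-count is additive under $\oplus$, so no nontrivial decomposition is possible. The paper phrases this as additivity of the index over the expansion into generators rather than as $\oplus$-indecomposability, but the two formulations are equivalent, and your explicit verification that every nonempty array in $\mathscr{O\!A}(N,d,k)$ has $r=\lambda d^k$ with $\lambda\geq 1$ only makes explicit what the paper's proof uses implicitly.
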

\begin{proof}
Every vector $|\Psi_{N,d,k}\rangle$ associated to an OA($r,N,d,k$) can be obtained as a non-negative integer combination of the generating states, i.e. $|\Psi_{N,d,k}\rangle=\sum_{i=1}^{m}\alpha_i\,|g^{(i)}_{N,d,k}\rangle$ with $\alpha_i \in \mathbb N_0$. If the OA associated to the $i$th generator $|g^{(i)}_{N,d,k}\rangle$ has index $\lambda^{(i)}_{N,d,k}$ then the OA associated to the state $|\Psi_{N,d,k}\rangle$ has index $\lambda=\sum_{i=i}^{m}\alpha_i\lambda^{(i)}_{N,d,k}$. Given that OAs associated to MDS codes have index unity, i.e. $\lambda=1$, and $\alpha_i\geq0$, we conclude that $|\Psi_{N,d,k}\rangle$ have to be a generator state $|g^{(i)}_{N,d,k}\rangle$, for a suitable value of $i\in[1,\dots,m]$.
\end{proof}

Let us observe, for example, that for the set $\mathcal G(2,2,1)$ we find the maximally entangled states given in Eq.~\eqref{gen21}, namely the Bell states.
 The set $\mathcal G(3,2,2)$ includes the states of Eq.~\eqref{g32}, 
which are both equivalent under local unitary operations to the GHZ state $\ket{GHZ_3}$
of three qubits. The $W$ class, inequivalent to the GHZ class, does not appear in $\mathcal G(3,2,k)$. This may be related to the fact that any 3-qubit state in the $W$ class 
has zero three-tangle \cite{CKW00}, while the GHZ class is characterized by a positive 
three-tangle. Thus, 3-qubit pure states belonging to the $W$ class form a measure zero set within the set of pure states, whereas the GHZ class is generic, i.e.~a random $3$--qubit pure quantum state belongs to the GHZ class with probability one. Despite this fact, the 3-qubit $W$ class is encoded in some elements of $\mathcal G(4,2,1)$. 
Indeed, the array
\be
\text{OA$_{4,2,1}^{(42)}$}=\begin{array}{cccc}
 0 & 0 & 0 & 1 \\
 0 & 0 & 1 & 0 \\
 0 & 1 & 0 & 0 \\
 1 & 0 & 0 & 0 \\
 1 & 1 & 1 & 1 \\
 1 & 1 & 1 & 1 \\
\end{array}
\ee
produces a three qubit $W$ state, $|W_3\rangle=|001\rangle+|010\rangle+|100\rangle$, 
if the first user -- Alice -- applies a projective measurement 
in the computational basis and obtains the outcome zero --
see Appendix \ref{app1}.

\section{Detection of non-isomorphic arrays with help of entanglement theory}\label{S5}
In this section, we show how the theory of quantum entanglement can be useful to 
determine whether two given orthogonal arrays are isomorphic. This problem is equivalent to determining whether their associated pure quantum states are equivalent under swap operations and local unitary operations corresponding to permutations of levels. 
The following result holds:
\begin{prop}\label{prop5}
Let $|\Psi_{N,d,k}\rangle$ and $|\tilde{\Psi}_{N,d,k}\rangle$ be two $N$-partite pure states associated by Eq.(\ref{OAPsi}) to two given arrays. 
A necessary condition for the arrays to be isomorphic
 is that for both sets of single party reductions 
\begin{equation}
\Gamma:\{\rho^{(i)}\,|\, \rho^{(i)}=\mathrm{Tr}_{1,\dots,i-1,i+1,\dots,N}|\Psi_{N,d,k}\rangle\langle\Psi_{N,d,k}|\}
\end{equation}
and
\begin{equation}
\tilde{\Gamma}:\{\tilde{\rho}^{(i)}\,|\, \tilde{\rho}^{(i)}=\mathrm{Tr}_{1,\dots,i-1,i+1,\dots,N}|\tilde{\Psi}_{N,k}\rangle\langle\tilde{\Psi}_{N,k}|\}\;,
\end{equation}
where $i=1,\dots, N$, there exists an injective permutation function $f:\mathbb{Z}_N\rightarrow\mathbb{Z}_N$ and a suitable permutation matrix $P_i$ such that
\begin{equation}\label{th}
\rho^{(i)}=P_i\,\tilde{\rho}^{(f[i])}P_i^{T}\;,
\end{equation}
where $\bullet^T$ denotes matrix transposition.
\end{prop}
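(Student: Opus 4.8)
The plan is to translate the combinatorial notion of OA isomorphism into an operation on the associated quantum states, and then track how each elementary move acts on the single-party reductions. Recall that two arrays are isomorphic precisely when one is obtained from the other by a composition of three elementary moves: permutations of rows, permutations of columns, and permutations of symbols within columns. The first step is to record how each move acts on the state $\tau(\text{OA})$ of Eq.~\eqref{OAPsi}. Since $\tau$ produces the unordered sum $\sum_i |a_{i,1}\cdots a_{i,N}\rangle$, a permutation of rows merely reorders the summands and therefore leaves the state --- hence every reduced density matrix --- completely unchanged. A permutation $\pi$ of columns relabels the tensor factors, i.e.\ it acts on the state by the swap operator $S_\pi$ sending $|i_1\cdots i_N\rangle \mapsto |i_{\pi(1)}\cdots i_{\pi(N)}\rangle$. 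Finally, a permutation of the $d$ symbols within column $j$ acts locally on party $j$ as a permutation matrix $P_j$, i.e.\ as $\bigotimes_j P_j$ on the full state, where each $P_j$ is real orthogonal so that $P_j^{-1}=P_j^T$.

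Next I would combine these moves into a canonical form for the overall transformation. Because conjugating a tensor product of local operators by a swap simply permutes which factor each operator acts on, the group generated by the swaps and the local permutation matrices consists exactly of elements of the form $(\bigotimes_j P_j)\,S_\pi$. Discarding the irrelevant row permutations, isomorphism of the two arrays is thus equivalent to a relation $|\tilde\Psi_{N,d,k}\rangle = (\bigotimes_j P_j)\,S_\pi\,|\Psi_{N,d,k}\rangle$ for suitable permutation matrices $P_j$ and a single column permutation $\pi$.

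The third step is the direct computation of the single-party reductions from this canonical form. Writing $|\Phi\rangle=S_\pi|\Psi_{N,d,k}\rangle$, the swap relabels the reductions, so $\mathrm{Tr}_{\neq i}|\Phi\rangle\langle\Phi| = \rho^{(g[i])}$ for the permutation $g$ of party labels induced by $\pi$. Applying then the local factor $\bigotimes_j P_j$ and tracing out all parties except $i$, the factors $P_j$ with $j\neq i$ disappear under the partial trace (since $P_j P_j^\dagger = \mathbb I_d$), while $P_i$ conjugates the surviving reduction. Hence $\tilde\rho^{(i)} = P_i\,\rho^{(g[i])}\,P_i^T$, which rearranges to exactly the claimed relation $\rho^{(i)}=P_i\,\tilde\rho^{(f[i])}P_i^T$ after identifying $f$ with $g^{-1}$ and relabelling the permutation matrices.

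The computation is elementary; the one point that requires care, and which I would emphasize, is the bookkeeping of indices when composing the column swap with the local symbol permutations, together with the observation that a local operation on party $j$ leaves the reduction $\rho^{(i)}$ of every other party $i\neq j$ untouched. I would also stress that the proposition asserts only a \emph{necessary} condition: permutations of symbols within columns form a strict subset of all local unitaries, and single-party reductions ignore higher-order correlations, so the displayed relation among the $\rho^{(i)}$ can hold for non-isomorphic arrays. This is precisely why it serves as a practical test for \emph{non}-isomorphism rather than as a full characterization of isomorphism.
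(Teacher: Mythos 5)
Your proposal is correct and follows essentially the same route as the paper: decompose the isomorphism into row permutations (which leave the state invariant), column permutations (swaps of tensor factors, absorbed into the relabelling $f$), and symbol permutations within columns (local permutation matrices, which conjugate the single-party reductions since they are unitary). Your version is somewhat more explicit about the canonical form $(\bigotimes_j P_j)S_\pi$ and the index bookkeeping, but no new idea is involved.
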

\begin{proof}
Let $\ket{\Psi_{N,d,k}}$ be the pure quantum state corresponding to the orthogonal array OA($r,N,d,k$). One has to check that permutations of rows, columns and  symbols within columns, lead to quantum states $\ket{\tilde\Psi_{N,d,k}}$, such that the single party reductions $\tilde{\rho}^{(i)}$ are related to the reductions ${\rho}^{(i)}$ of $\ket{\Psi_{N,d,k}}$ through Eq.~\eqref{th}. Interchanges of rows do not modify the single party reductions since, under interchanges of rows only, $\ket{\tilde\Psi_{N,d,k}}$ coincides with $\ket{\Psi_{N,d,k}}$. Interchanges of columns correspond to a renaming of the labels of the local Hilbert spaces, which is taken into account by a suitable choice of the permutation function $f$. Permutations of symbols within columns are described by local operations of the form $P_1\otimes \dots\otimes P_N$, where $\{P_i\}_{i=1,\dots,N}$ are permutation matrices. Let us recall that, for any two states equivalent under local unitary transformations, $\ket{\phi_1}$ and $\ket{\phi_2}=U_1\otimes\dots\otimes U_N\,\ket{\phi_1}$, one has that the single party reductions are related as follows,
\be
\rho_2^{(i)}= U_i\, \rho_1^{(i)}\, U_i^\dagger\;,
\ee
where $\rho_1^{(i)}$ and $\rho_2^{(i)}$ denote the $i^{\text{th}}$ single party reduction of $\ket{\phi_1}$ and $\ket{\phi_2}$, respectively. Since any permutation matrix is also unitary, one arrives at Eq.~\eqref{th}. 
\end{proof}

\begin{table}
\begin{center}
\begin{tabular}{|C{1.5cm}|C{1.5cm}|C{1.5cm}|C{1.5cm}|C{1.5cm}|C{1.5cm}|C{1.5cm}|}
\hline
$ $ & $I_1$&$I_2$&$I_3$&$I_4$&$I_5$&$I_6$\\
\hline
$|\Sigma_1\rangle$ & 8 &13/18&13/18&5/9&13/36&0\\
$|\Sigma_2\rangle$ & 8 &7/9&5/9&5/9&1/3&4/81\\
\hline
\end{tabular}
\end{center}
\caption{Polynomial invariants $I_1, \dots, I_6$  for the three qubit states $\ket{\Sigma_1}$ and $\ket{\Sigma_2}$, which correspond to the  arrays $OA_1$ and $OA_2$.
 We follow the conventions of Sudbery \cite{sudbery2001local}.}
\label{Table3}
\end{table}

It is easy to show that the conditions imposed by Proposition \ref{prop5} are necessary, but not sufficient. For example, there are two inequivalent 1-uniform quantum states for four qubits systems:
\begin{equation}\label{quadr}
|\Xi_1\rangle=|0000\rangle+|0011\rangle+\ket{1101}+\ket{1110},
\end{equation}
and
\begin{equation}\label{bip}
|\Xi_2\rangle=|0000\rangle+|0011\rangle+|1100\rangle+|1111\rangle.
\end{equation}
Given that both states are 1-uniform, all single party reductions are maximally mixed. However, the corresponding OAs,
\be
OA_1=\ba{cccc}
0&0&0&0\\
0&0&1&1\\
1&1&0&1\\
1&1&1&0
\ea\;\qquad\mbox{and}\qquad
OA_2=\ba{cccc}
0&0&0&0\\
0&0&1&1\\
1&1&0&0\\
1&1&1&1
\ea\;,
\ee
are non-isomorphic.

We can use Proposition \ref{prop5} to define a sufficient criterion to detect non-isomorphic orthogonal arrays.
\begin{corol}\label{cor1}
Let $|\Psi_{N,d,k}\rangle$ and $|\tilde{\Psi}_{N,d,k}\rangle$ be $N$-partite pure states associated to two given OA($r,N,d,k$) and let 
\begin{equation}
\{\pi^{(i)}=\mathrm{Tr}[(\rho^{(i)})^2]\}\quad\mbox{ and }\quad\{\tilde{\pi}^{(i)}=\mathrm{Tr}[(\tilde{\rho}^{(i)})^2]\}
\end{equation}
be the sets of purity of reductions $\rho^{(i)}_j$. If the sets $\{\pi^{(i)}\}$ and $\{\tilde{\pi}^{(i)}\}$ are not identical then the OAs are non-isomorphic. 
\end{corol}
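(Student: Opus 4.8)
The plan is to prove the contrapositive and let Proposition \ref{prop5} carry essentially all the weight: assuming the two arrays are isomorphic, I would show that the two sets of purities must coincide, so that distinct purity sets preclude isomorphism. The only extra ingredient needed is the invariance of purity under conjugation by a permutation matrix.

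First I would record this invariance. A permutation matrix $P_i$ is orthogonal, $P_i^T P_i = \mathbb{I}$, hence in particular unitary. Using $(P_i\,\tilde\rho^{(f[i])}\,P_i^T)^2 = P_i\,(\tilde\rho^{(f[i])})^2\,P_i^T$ together with the cyclicity of the trace, one obtains
\be
\mathrm{Tr}\!\left[(P_i\,\tilde\rho^{(f[i])}\,P_i^T)^2\right] = \mathrm{Tr}\!\left[(\tilde\rho^{(f[i])})^2\right]\;,
\ee
so conjugation by $P_i$ does not change the purity of a reduction.

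Next, suppose the two arrays are isomorphic. By Proposition \ref{prop5} there exist an injective --- hence, on the finite set $\mathbb{Z}_N$, bijective --- map $f:\mathbb{Z}_N\to\mathbb{Z}_N$ and permutation matrices $P_i$ with $\rho^{(i)} = P_i\,\tilde\rho^{(f[i])}\,P_i^T$ for every $i=1,\dots,N$. Taking purities of both sides and invoking the invariance above yields $\pi^{(i)} = \tilde\pi^{(f[i])}$ for all $i$. Since $f$ merely relabels the index set, the family $\{\tilde\pi^{(f[i])}\}$ is a reordering of $\{\tilde\pi^{(i)}\}$, and therefore the two sets $\{\pi^{(i)}\}$ and $\{\tilde\pi^{(i)}\}$ are identical.

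The corollary is then immediate by contraposition. I do not expect any genuine obstacle: all the structural content was established in Proposition \ref{prop5}, and the present argument only adds the one-line permutation-invariance of $\mathrm{Tr}[\rho^2]$. The sole point deserving mild care is the distinction between comparing the purities as multisets versus as sets; isomorphism forces the stronger multiset equality through the bijection $f$, which in particular implies equality as sets, precisely the hypothesis that the contrapositive negates.
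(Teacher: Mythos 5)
Your proposal is correct and matches the paper's own argument: the paper likewise derives the corollary by combining Proposition \ref{prop5} with the observation that conjugation by a permutation matrix preserves purity. Your write-up merely spells out the contrapositive and the bijectivity of $f$ more explicitly than the paper's one-line proof.
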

\begin{proof}
The proof follows immediately from the previous proposition, together with the observation that any two matrices $\rho_1$ and $\rho_2$, which are connected by a similarity transformation of the form $\rho_1=P\, \rho_2P^T$ with $P$ a permutation matrix, have the same purity. 
\end{proof}

Another sufficient criterion is the following:
\begin{prop}\label{prop6}
If two $N$-partite pure quantum states $|\Psi_{N,d,k}\rangle$ and $|\tilde{\Psi}_{N,d,k}\rangle$ have different amounts of entanglement, according to any measure, then the associated arrays  defined by Eq.(\ref{OAPsi}) are non-isomorphic.
\end{prop}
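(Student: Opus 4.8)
The plan is to establish the contrapositive: if the two arrays are isomorphic, then the corresponding states carry the same amount of entanglement according to every measure. Together with the hypothesis that the entanglement differs, this immediately forces the arrays to be non-isomorphic.

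First I would recall, as already noted in the discussion preceding Proposition \ref{prop5}, that isomorphic arrays give rise to states connected by a \emph{free operation}. The proof of Proposition \ref{prop5} in fact exhibits the transformation relating the two states explicitly: a composition of (i) row permutations, which leave the state $\ket{\Psi_{N,d,k}}$ entirely invariant, (ii) column permutations, which amount to a relabelling of the local Hilbert spaces, i.e.~a permutation of the tensor factors, and (iii) permutations of symbols within columns, which act as a local unitary $P_1\otimes\dots\otimes P_N$ with each $P_i$ a permutation matrix. Thus, up to a relabelling of the parties, $\ket{\tilde\Psi_{N,d,k}}$ is obtained from $\ket{\Psi_{N,d,k}}$ by a local unitary transformation.

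The second step is to invoke the defining invariance properties of entanglement measures. By definition, any entanglement measure is invariant under local unitary operations, so the symbol permutations in (iii) leave every measure unchanged, while the row permutations in (i) do not modify the state at all. It then remains only to handle the party permutation in (ii). For the averaged bipartite von Neumann entropy adopted in this work, invariance under relabelling of the parties is manifest, since one symmetrically averages over all bipartitions; more generally, this is a natural requirement for any genuinely multipartite entanglement measure, entanglement being a physical property that cannot depend on the arbitrary labelling of the subsystems. Hence isomorphic arrays produce states with identical entanglement under any such measure, which establishes the contrapositive.

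The only delicate point, and therefore the main obstacle, is precisely the invariance under permutation of the parties in step (ii): whereas local-unitary invariance is built into the very definition of an entanglement measure, permutation invariance has to be postulated separately, or one restricts attention to the class of permutation-symmetric measures, which comprises the averaged entropy used here. Once this mild assumption is granted, the remainder of the argument is routine and the conclusion follows at once.
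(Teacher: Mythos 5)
Your proposal is correct and follows essentially the same route as the paper, whose entire proof is the one-line observation that swap operations and local permutation operations preserve entanglement; you simply spell out the contrapositive and the three constituent operations (row, column, and within-column symbol permutations) in more detail. Your added remark that invariance under party permutations is an extra postulate beyond local-unitary invariance is a fair refinement of a point the paper takes for granted, but it does not change the argument.
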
 
The proof follows straightforwardly from the fact that both the swap operations and local permutation operations preserve entanglement. As an example, consider the following two orthogonal arrays,
\begin{align*}
OA_1=\begin{array}{ccc}
 0 & 0 & 0 \\
 0 & 0 & 0 \\
 0 & 0 & 1 \\
 0 & 1 & 0 \\
 1 & 0 & 1 \\
 1 & 1 & 0 \\
 1 & 1 & 1 \\
 1 & 1 & 1 \\
\end{array}\;,\qquad\qquad\qquad\qquad
OA_2=\begin{array}{ccc}
 0 & 0 & 0 \\
 0 & 0 & 0 \\
 0 & 0 & 1 \\
 0 & 1 & 1 \\
 1 & 0 & 1 \\
 1 & 1 & 0 \\
 1 & 1 & 0 \\
 1 & 1 & 1 \\
\end{array}\;,
\end{align*}
which give rise to the 3-qubit states $\ket{\Sigma_1}$ and $\ket{\Sigma_2}$, respectively. Since the polynomial invariants \cite{sudbery2001local} for $\ket{\Sigma_1}$ and $\ket{\Sigma_2}$ are not the same, $OA_1$ and $OA_2$ are non-isomorphic arrays (see Table \ref{Table3}).

\section{Entanglement classes of generating states}
\label{S6}
In this section, we describe the entanglement classes of array-based states belonging to the set $\mathcal G(N,d,k)$, for some small values of the parameters $N$, $d$ and $k$. 
This involves two separate steps: \emph{(i)} computing the Hilbert basis of the cone $C(N,d,k)$, that is finding the set $\mathscr G(N,d,k)$ of generating arrays; \emph{(ii)} identifying the non-isomorphic arrays within $\mathscr G(N,d,k)$, which are the representative states of each class.

The first task is a standard integer programming task. Concerning the second task, we make use of the fact that any orthogonal array can be encoded as a vertex-colored graph, in such a way that two arrays are isomorphic iff the corresponding graphs are \cite{bulutoglu2008classification}. 
Testing for graph isomorphism was performed via the software program \texttt{nauty} \cite{mckay2014practical}. 

\subsection{Three qubits}
Let us consider in detail the case $N=3$, $k=1$; since $\mathscr{O\!A}(N,d,k)\subset \mathscr{O\!A}(N,d,k')$ whenever $k'<k$, we assume the lowest possible value of strength. There are only two non-isomorphic arrays in the set $\mathscr G(3,2,1)$:
\be\label{oanik1}
\text{OA$_{3,2,1}^{(4)}$} = 
\begin{matrix}
0 & 0 & 0 \\
1 & 1 & 1 
\end{matrix}\;, \qquad \qquad
\text{OA$_{3,2,1}^{(6)}$} = 
\begin{matrix}
0 & 0 & 0 \\
0 & 1 & 1 \\
1 & 0 & 1 \\
1 & 1 & 0 
\end{matrix}\;.
\ee
Therefore, there are only two classes in the set $\mathcal G(3,2,1)$ of generating states, 
with representatives denoted by $\ket{\phi_{3,2,1}^{(\gamma)}}$,
\be
\begin{split}
\ket{\phi_{3,2,1}^{(\rn{1})}} =& \ket{g_{3,2,1}^{(4)}} = \ket{000} + \ket{111}\;,\\
\ket{\phi_{3,2,1}^{(\rn{2})}} =& \ket{g_{3,2,1}^{(6)}} = \ket{000} + \ket{011} +\ket{101} + \ket{110}\;. 
\end{split}
\ee

This case is sufficiently simple that we also show the entanglement classes for $\mathcal{O\!A}(3,2,1)$. One finds that there are 9 classes, with representative states $\ket{\psi_{3,2,1}^{(\gamma)}}$:
\be
\begin{split}
\ket{\psi_{3,2,1}^{(\rn{1})}}\; =\; &\ket{000}+\ket{111}\;,\\
\ket{\psi_{3,2,1}^{(\rn{2})}}\; =\; &\ket{000}+\ket{011}+\ket{101}+\ket{110}\;,\\
\ket{\psi_{3,2,1}^{(\rn{3})}}\; =\; &\, 2\ket{000}+\ket{001}+\ket{110}+2\ket{111}\;,\\
\ket{\psi_{3,2,1}^{(\rn{4})}}\; =\; &\, 2\ket{000}+\ket{011}+\ket{101}+\ket{110}+\ket{111}\;,\\
\ket{\psi_{3,2,1}^{(\rn{5})}}\; =\; &\, 3 \ket{000}+\ket{011}+\ket{101}+\ket{110}+2\ket{111}\;,\\
\ket{\psi_{3,2,1}^{(\rn{6})}}\; =\; &\, 2\ket{000}+\ket{001}+\ket{011}+\ket{101}+2\ket{110}+\ket{111}\;,\\
\ket{\psi_{3,2,1}^{(\rn{7})}}\; =\; &\, 2\ket{000}+\ket{001}+\ket{010}+\ket{101}+\ket{110}+2\ket{111}\;,\\
\ket{\psi_{3,2,1}^{(\rn{8})}}=\; & \ \ket{000}+\ket{001}+\ket{110}+\ket{111}\;,\\
\ket{\psi_{3,2,1}^{(\rn{9})}}\; =\; & \ \ket{000}+\ket{001}+\ket{010}+\ket{011}+\ket{100}+\ket{101}+\ket{110}+\ket{111}\;.
\end{split}
\ee 
The first two states are based on the OAs of Eq.~\eqref{oanik1}; they are unitarily equivalent to $\ket{GHZ_3}$. The states $\ket{\psi_{3,2,1}^{(\gamma)}}$ (with $\gamma= \rn{3},\dots,\rn{6}$) are of the GHZ type, but not unitarily equivalent to $\ket{GHZ_3}$. $\ket{\psi_{3,2,1}^{(\rn{7})}}$ is of the W type. 
The state
$\ket{\psi_{3,2,1}^{(\rn{8})}}$ has only bipartite entanglement among the first two parties; in fact, it can be written as $\ket{GHZ_2}\otimes \ket{+}$. 
The state $\ket{\psi_{3,2,1}^{(\rn{9})}}$ is fully separable and corresponds to the full factorial $\mathfrak F_{3,2}$ --
 see also Table \ref{Table1}. We have therefore been able to reproduce all qualitatively different types of entanglement for three qubits \cite{dur2000three}. Let us remark that states that are inequivalent in the array-based classification can nonetheless be equivalent in $\mathcal H_d^{\otimes N}$, in which the set of free operations  is larger.

\begin{table}
\begin{center}
\begin{tabular}{|C{1.6cm}|C{1.6cm}|C{1.6cm}|C{1.6cm}|C{1.6cm}|C{1.6cm}|C{1.6cm}|}
\hline
$\ket{\psi_{3,2,1}^{(\gamma)}}$ & $I_2$ & $I_3$ & $I_4$ & $I_5$ & $I_6$ & type \\[0.15cm]
\hline
$\ket{\psi_{3,2,1}^{(\rn{1})}}$ & 1/2 & 1/2 & 1/2 & 1/4 & 1/4 & GHZ \\ [0.15cm]

$\ket{\psi_{3,2,1}^{(\rn{2})}}$ & 1/2 & 1/2 & 1/2 & 1/4 & 1/4 & GHZ \\ [0.15cm]

$\ket{\psi_{3,2,1}^{(\rn{3})}}$  & 41/50 & 1/2 & 1/2 & 1/4 & 81/2500 & GHZ\\ [0.15cm]

$\ket{\psi_{3,2,1}^{(\rn{4})}}$  & 9/16 & 9/16 & 9/16 & 73/256 & 9/64 & GHZ\\ [0.15cm]

$\ket{\psi_{3,2,1}^{(\rn{5})}}$  &  9/16 & 9/16 & 9/16 & 73/256 & 9/64 &  GHZ\\ [0.15cm]

$\ket{\psi_{3,2,1}^{(\rn{6})}}$  & 7/9 & 5/9 & 5/9 & 1/3 & 4/81 & GHZ\\ [0.15cm]

$\ket{\psi_{3,2,1}^{(\rn{7})}}$  & 13/18 & 13/18 & 5/9 & 13/36 & 0 &  W \\ [0.15cm]

$\ket{\psi_{3,2,1}^{(\rn{8})}}$  &  1& 1/2 & 1/2 & 1/4 & 0 &  bipartite \\ [0.15cm]

$\ket{\psi_{3,2,1}^{(\rn{9})}}$  & 1 & 1 & 1 & 1 & 0 &  separable \\ [0.15cm]
\hline
\end{tabular}
\end{center}
\caption{Representative $3$--qubit states for the nine entanglement classes in $\mathcal{O\!A}(3,2,1)$ and their polynomial invariants. We follow the convention of Sudbery \cite{sudbery2001local}. In particular, $I_6$ is related to the three-tangle $\tau_{ABC}$ via $I_6 = \tau_{ABC}^2/4$. We omit $I_1$, which is trivially the norm of the state. States having the same invariants are unitarily equivalent in $\mathcal H_2^{\otimes 3}$.}
\label{Table1}
\end{table}

\subsection{Four qubits}
Let us now consider the case of 4 qubits. For strength $k=1$, there are only three
 non-isomorphic arrays in $\mathscr G(4,2,1)$: OA$_{4,2,1}^{(8)}$, OA$_{4,2,1}^{(32)}$ and OA$_{4,2,1}^{(48)}$; they are shown explicitly in Appendix ~\ref{app1}. Hence
 they lead to three corresponding classes of entanglement, with representative states:
\be
\begin{split}
\ket{\phi_{4,2,1}^{(\rn{1})}}=&\ket{0000}+\ket{1111}\;,\\
\ket{\phi_{4,2,1}^{(\rn{2})}}=&\ket{0000}+\ket{0011}+\ket{1101}+\ket{1110}\;, \\
\ket{\phi_{4,2,1}^{(\rn{3})}}=&\,2\ket{0000}+\ket{0111}+\ket{1011}+\ket{1101}+\ket{1110}\;. \\
\end{split}
\ee 
The first one is the GHZ state $\ket{GHZ_4}$. The second is also a 1-uniform state, like $\ket{GHZ_4}$, but not SLOCC-equivalent to it. The third has genuine quadripartite entanglement, with average purity of its single-party reductions equal to $\sim 0.531$.
For more information see Table \ref{Table2}. 

Finding all classes in $\mathcal{O\!A}(4,2,1)$ is significantly more complicated than for 3 qubits; there are in fact a total of 1110 different classes. It is reasonable to expect the number of classes to grow quickly with the number of parties $N$, though by construction it always remains finite. 

\begin{table}
\begin{center}
\begin{tabular}{|C{1.6cm}|C{1.6cm}|C{1.6cm}|C{1.6cm}|C{1.6cm}|}
\hline
$\ket{\phi_{4,2,1}^{(\gamma)}}$ & $H$ & $L$ & $M$ & $D$  \\[0.15cm]
\hline
$\ket{\phi_{4,2,1}^{(\rn{1})}}$ & 1/2 & 0 & 0 & 0  \\ [0.15cm]

$\ket{\phi_{4,2,1}^{(\rn{2})}}$ & 0 & 0 & 1/16 & 0  \\ [0.15cm]

$\ket{\phi_{4,2,1}^{(\rn{3})}}$ & 0 & 0 & 0 & 1/128  \\ [0.15cm]
\hline
\end{tabular}
\end{center}
\caption{Representative $4$--qubit states for the three entanglement classes in the set $\mathcal{G}(4,2,1)$ and their polynomial invariants. We follow here the convention
of Luque and Thibon \cite{LT03}. 
States having the same invariants are SLOCC-equivalent in $\mathcal H_2^{\otimes 4}$.}
\label{Table2}
\end{table}

We turn instead to a more detailed study of the entanglement properties of states in $\mathcal G(4,2,k)$.
The hyperdeterminant is a generalization of the determinant of a matrix to tensors \cite{GKZ94}. Its absolute value is used in quantum information theory as an entanglement measure, generalizing the concurrence \cite{HW97} and three-tangle $\tau_{ABC}$ for systems of two and three qubits, respectively \cite{MW02}. 
In Appendix \ref{App2}, we compute the hyperdeterminant, as well as other entanglement measures \cite{LT03}, for every state in $\mathcal{G}(4,2,k)$ and all 
possible values of the strength $k$.

Let us remark that, for 4-qubit systems, there are nine continuous families of entangled states in the SLOCC classification \cite{VDMV02}. It has been proven that eight of those classes have zero hyperdeterminant \cite{LT03}, the remaining one being called \emph{generic}. 
Up to SLOCC operations any state belonging to the generic class can be written as \cite{GW10},
\begin{equation}\label{generic_class}
|\phi\rangle=\sum_{j=0}^3 a_j|u_j\rangle,
\end{equation}
where $|u_0\rangle=|\Phi^+\rangle|\Phi^+\rangle$, $|u_1\rangle=|\Phi^-\rangle|\Phi^-\rangle$, $|u_2\rangle=|\Psi^+\rangle|\Psi^+\rangle$, $|u_3\rangle=|\Psi^-\rangle|\Psi^-\rangle$ and $|\Phi^{\pm}\rangle=|00\rangle\pm|11\rangle$ and $|\Psi^{\pm}\rangle=|01\rangle\pm|10\rangle$ are the Bell states. We observe that any 4-qubit state $|\phi\rangle$ belonging to the generic class can be written as a linear combination of the states associated to the following four elements of the Hilbert basis for the class $\mathscr{O\!A}(4,2,1)$:
\begin{center}
OA$_{4,2,1}^{(2)}$,\quad OA$_{4,2,1}^{(3)}$, \quad OA$_{4,2,1}^{(5)}$, \quad OA$_{4,2,1}^{(8)}$.
\end{center}
Explicit expressions for these OAs can be found in Appendix \ref{app1}. Furthermore, elements OA$_{4,2,1}^{(j)},\,j\in\{1,32\}$ and OA$_{4,2,1}^{(j)},\,j\in\{33,48\}$ give rise to states with hyperdeterminants $\Delta=0$ and $\Delta=-0.506821$, respectively. On the other hand, for the class $\mathscr{O\!A}(4,2,2)$, elements of the Hilbert basis OA$_{4,2,2}^{(j)},\,j\in\{1,10\}$ and OA$_{4,2,2}^{(j)},\,j\in\{11,26\}$ have hyperdeterminants $\Delta=0$ and $\Delta=-0.013995$, respectively. Finally, the two generating OAs of $\mathscr{O\!A}(4,2,3)$ have hyperdeterminant $\Delta=0$.

\subsection{Five qubits and beyond}
Let us now consider systems consisting of five qubits. For strength $k=1$, there are eleven non-isomorphic generating arrays in $\mathscr G(5,2,1)$, giving rise to as many entanglement classes in the 
corresponding set $\mathcal G(5,2,1)$. Here, we report a possible choice of representative states:
\begin{align*}
\ket{\phi_{5,2,1}^{(\rn{1})}}=& \ket{00000}+\ket{11111}\;,\\
\ket{\phi_{5,2,1}^{(\rn{2})}}=& \ket{00111}+\ket{01000}+\ket{10100}+\ket{11011} \;,\\
\ket{\phi_{5,2,1}^{(\rn{3})}}=& \ket{00111}+\ket{01000}+\ket{10011}+\ket{11100}\;,\\
\ket{\phi_{5,2,1}^{(\rn{4})}}=& \ket{00100}+\ket{00001}+\ket{00010}+\ket{11000}+2\ket{11111} \;,\\
\ket{\phi_{5,2,1}^{(\rn{5})}}=& \ket{00000}+\ket{00110}+\ket{01001}+\ket{10011}+\ket{11101}+\ket{11110} \;,\\
\ket{\phi_{5,2,1}^{(\rn{6})}}=& \ket{00100}+\ket{01000}+\ket{00011}+\ket{10011}+\ket{11101}+\ket{11110} \;,\tag{\stepcounter{equation}\theequation}\\
\ket{\phi_{5,2,1}^{(\rn{7})}}=& \ket{01000}+\ket{00011}+\ket{00101}+\ket{00110}+\ket{11011}+\ket{11101}+\ket{11110}+\ket{10000} \;,\\
\ket{\phi_{5,2,1}^{(\rn{8})}}=&\,\ket{00000}+2\ket{01111}+\ket{10101}+\ket{10110}+\ket{11000}+\ket{11011} \;,\\
\ket{\phi_{5,2,1}^{(\rn{9})}}=& \ket{01000}+\ket{00000}+2\ket{00111}+\ket{11011}+\ket{11101}+\ket{11110}+\ket{10000}  \;,\\
\ket{\phi_{5,2,1}^{(\rn{10})}}=&\ket{01000}+\ket{00100}+\ket{00001}+\ket{00010}+\ket{10000}+3\ket{11111}  \;,\\
\ket{\phi_{5,2,1}^{(\rn{11})}}=&\, 3\ket{00000}+2\ket{01111}+\ket{11100}+\ket{11001}+\ket{11010}+2\ket{10111} \;.
\end{align*}

For six qubits, the problem of computing the Hilbert basis of the cone $C(6,2,k)$ is computationally out of reach for small values of $k$. In fact, we managed to obtain the generating arrays only for $k=4,\,5$. Here, the case $k=4$ is discussed, since the case $k=5$ is already included in it. There are three non-isomorphic generating arrays in $\mathscr G(6,2,4)$, giving rise to as many classes, with the following representative states:  
\begin{align*}
\ket{\phi_{6,2,4}^{(\rn{1})}}=&
\ket{ 0  0  0  0  0  0}+ 
\ket{ 0  0  0  0  0  1}+ 
\ket{ 0  0  0  1  1  0}+ 
\ket{ 0  0  0  1  1  1}+
\ket{ 0  0  1  0  1  0}+
\ket{ 0  0  1  0  1  1}+
\ket{ 0  0  1  1  0  0}\\
&+\ket{ 0  0  1  1  0  1}+
\ket{ 0  1  0  0  1  0}+
\ket{ 0  1  0  0  1  1}+
\ket{ 0  1  0  1  0  0}+
\ket{ 0  1  0  1  0  1}+
\ket{ 0  1  1  0  0  0}+
\ket{ 0  1  1  0  0  1}\\
&+\ket{ 0  1  1  1  1  0}+
\ket{ 0  1  1  1  1  1}+
\ket{ 1  0  0  0  1  0}+
\ket{ 1  0  0  0  1  1}+
\ket{ 1  0  0  1  0  0}+
\ket{ 1  0  0  1  0  1}+
\ket{ 1  0  1  0  0  0}\\
&+\ket{ 1  0  1  0  0  1}+
\ket{ 1  0  1  1  1  0}+
\ket{ 1  0  1  1  1  1}+
\ket{ 1  1  0  0  0  0}+
\ket{ 1  1  0  0  0  1}+
\ket{ 1  1  0  1  1  0}+
\ket{ 1  1  0  1  1  1}\\
&+\ket{ 1  1  1  0  1  0}+
\ket{ 1  1  1  0  1  1}+
\ket{ 1  1  1  1  0  0}+
\ket{ 1  1  1  1  0  1}\;,\\
\ket{\phi_{6,2,4}^{(\rn{2})}}=&
\ket{0  0  0  0  0  1}+
\ket{0  0  0  0  1  0}+
\ket{0  0  0  1  0  0}+
\ket{0  0  0  1  1  1}+
\ket{0  0  1  0  0  0}+
\ket{0  0  1  0  1  1}+
\ket{0  0  1  1  0  1}\\
&+\ket{0  0  1  1  1  0}+
\ket{0  1  0  0  0  0}+
\ket{0  1  0  0  1  1}+
\ket{0  1  0  1  0  1}+
\ket{0  1  0  1  1  0}+
\ket{0  1  1  0  0  1}+
\ket{0  1  1  0  1  0}\\
&+\ket{0  1  1  1  0  0}+
\ket{0  1  1  1  1  1}+
\ket{1  0  0  0  0  0}+
\ket{1  0  0  0  1  1}+
\ket{1  0  0  1  0  1}+
\ket{1  0  0  1  1  0}+
\ket{1  0  1  0  0  1}\\
&+\ket{1  0  1  0  1  0}+
\ket{1  0  1  1  0  0}+
\ket{1  0  1  1  1  1}+
\ket{1  1  0  0  0  1}+
\ket{1  1  0  0  1  0}+
\ket{1  1  0  1  0  0}+
\ket{1  1  0  1  1  1}\\
&+\ket{1  1  1  0  0  0}+
\ket{1  1  1  0  1  1}+
\ket{1  1  1  1  0  1}+
\ket{1  1  1  1  1  0}\;,\\
\ket{\phi_{6,2,4}^{(\rn{3})}}=&
2\ket{0  0  0  0  0  0}+
\ket{0  0  0  0  0  1}+
\ket{0  0  0  0  1  0}+
\ket{0  0  0  0  1  1}+
\ket{0  0  0  1  0  0}+
\ket{0  0  0  1  0  1}+
\ket{0  0  0  1  1  0}\\
&+2\ket{0  0  0  1  1  1}+
2\ket{0  0  1  0  0  1}+
2\ket{0  0  1  0  1  0}+
\ket{0  0  1  0  1  1}+
2\ket{0  0  1  1  0  0}+
\ket{0  0  1  1  0  1}+
\ket{0  0  1  1  1  0}\\
&+\ket{0  0  1  1  1  1}+
\ket{0  1  0  0  0  0}+
\ket{0  1  0  0  0  1}+
\ket{0  1  0  0  1  0}+
2\ket{0  1  0  0  1  1}+
\ket{0  1  0  1  0  0}+
2\ket{0  1  0  1  0  1}\\
&+2\ket{0  1  0  1  1  0}+
2\ket{0  1  1  0  0  0}+
\ket{0  1  1  0  0  1}+
\ket{0  1  1  0  1  0}+
\ket{0  1  1  0  1  1}+
\ket{0  1  1  1  0  0}+
\ket{0  1  1  1  0  1}\\
&+\ket{0  1  1  1  1  0}+
2\ket{0  1  1  1  1  1}+
\ket{1  0  0  0  0  0}+
\ket{1  0  0  0  0  1}+
\ket{1  0  0  0  1  0}+
2\ket{1  0  0  0  1  1}+
\ket{1  0  0  1  0  0}\\
&+2\ket{1  0  0  1  0  1}+
2\ket{1  0  0  1  1  0}+
2\ket{1  0  1  0  0  0}+
\ket{1  0  1  0  0  1}+
\ket{1  0  1  0  1  0}+
\ket{1  0  1  0  1  1}+
\ket{1  0  1  1  0  0}\\
&+\ket{1  0  1  1  0  1}+
\ket{1  0  1  1  1  0}+
2\ket{1  0  1  1  1  1}+
\ket{1  1  0  0  0  0}+
2\ket{1  1  0  0  0  1}+
2\ket{1  1  0  0  1  0}+
2\ket{1  1  0  1  0  0}\\
&+3\ket{1  1  0  1  1  1}+
\ket{1  1  1  0  0  0}+
\ket{1  1  1  0  0  1}+
\ket{1  1  1  0  1  0}+
2\ket{1  1  1  0  1  1}+
\ket{1  1  1  1  0  0}+
2\ket{1  1  1  1  0  1}\\
&+2\ket{1  1  1  1  1  0}\;.\tag{\stepcounter{equation}\theequation}
\end{align*}

To conclude this section, we remark that the combinatorial techniques presented in this work are not restricted to $N$-qudit systems. That is, they can also be applied to heterogeneous systems \cite{goyeneche2016multipartite}, made up of subsystems with a different number of internal levels. We have computed the Hilbert basis for the families \emph{(i)} $\mathscr{O\!A}(2^23^1,k)$ and \emph{(i)} $\mathscr{O\!A}(2^13^2,k)$, with $k=1,2$. Case \emph{(i)} corresponds to the first two columns having $d_1=2$ symbols and the third column having $d_2=3$ symbols, with $k$ denoting the strength; instead, case \emph{(ii)} corresponds to the first column having $d_1=2$ symbols and the second and third columns having $d_2=3$ symbols. The complete Hilbert bases can be found in the supplementary material provided online \cite{HBonline}. Here, we report the representative states for the entanglement classes of $\mathcal G(2^23^1, k)$, with strength $k=1$. For systems of two qubits and one qutrit, $\mathcal G(2^23^1, 1)$ is made up of six classes, with representative states:
\be
\begin{split}
\ket{\phi_{2^23^1,1}^{(\rn{1})}}=&\ket{0  0  0}+
\ket{1  1  0}+
\ket{0  0  1}+
\ket{0  1  2}+
\ket{1  0  2}+
\ket{1  1  1}\;,\\
\ket{\phi_{2^23^1,1}^{(\rn{2})}}=&\ket{0  0  0}+
\ket{0  1  0}+
\ket{1  0  1}+
\ket{1  1  2}+
\ket{1  1  1}+
\ket{0  0  2}\;,\\
\ket{\phi_{2^23^1,1}^{(\rn{3})}}=&2\ket{0  0  0}+
\ket{0  1  1}+
\ket{1  0  1}+
\ket{1  1  2}+
\ket{1  1  2}\;,\\
\ket{\phi_{2^23^1,1}^{(\rn{4})}}=&\ket{0  0  0}+
\ket{1  1  0}+
2\ket{0  0  1}+
\ket{1  1  2}+
\ket{1  1  2}\;,\\
\end{split}
\ee
\begin{equation*}
\begin{split}
\ket{\phi_{2^23^1,1}^{(\rn{5})}}=&\ket{0  0  0}+
\ket{1  1  0}+
\ket{0  0  1}+
\ket{0  0  2}+
\ket{1  1  1}+
\ket{1  1  2}\;,\\
\ket{\phi_{2^23^1,1}^{(\rn{6})}}=&2\ket{0  0  0}+
\ket{0  1  1}+
\ket{1  0  2}+
\ket{1  1  2}+
\ket{1  1  1}\;.
\end{split}
\end{equation*}
\vspace{1pt}

Instead, for systems of one qubit and two qutrits, one finds fifteen classes, with the following representatives:
\begin{align*}
\ket{\phi_{2^13^2,1}^{(\rn{1})}}=&\ket{0  0  0}+
\ket{1  1  2}+
\ket{1  2  1}+
\ket{1  0  0}+
\ket{0  1  2}+
\ket{0  2  1}\;,\\
\ket{\phi_{2^13^2,1}^{(\rn{2})}}=&\ket{0  0  0}+
\ket{1  1  1}+
\ket{1  0  2}+
\ket{1  2  0}+
\ket{0  1  2}+
\ket{0  2  1}\;,\\
\ket{\phi_{2^13^2,1}^{(\rn{3})}}=&2\ket{0  0  0}+
\ket{1  1  1}+
\ket{1  1  2}+
\ket{1  2  1}+
\ket{0  2  2}\;,\\
\ket{\phi_{2^13^2,1}^{(\rn{4})}}=&\ket{0  0  0}+
\ket{1  2  1}+
\ket{1  1  2}+
\ket{1  1  0}+
\ket{0  0  2}+
\ket{0  2  1}\;,\\
\ket{\phi_{2^13^2,1}^{(\rn{5})}}=&\ket{0  0  0}+
\ket{1  2  0}+
\ket{1  1  2}+
\ket{1  1  1}+
\ket{0  0  2}+
\ket{0  2  1}\;,\\
\ket{\phi_{2^13^2,1}^{(\rn{6})}}=&2\ket{0  0  0}+
2\ket{1  1  1}+
\ket{1  2  2}+
\ket{0  2  2}\;,\\
\ket{\phi_{2^13^2,1}^{(\rn{7})}}=&\ket{0  0  0}+
\ket{1  1  1}+
\ket{1  0  2}+
\ket{1  2  0}+
\ket{0  1  1}+
\ket{0  2  2}\;,\\
\ket{\phi_{2^13^2,1}^{(\rn{8})}}=&\ket{0  0  0}+
\ket{1  1  2}+
\ket{1  1  1}+
\ket{1  0  1}+
\ket{0  2  0}+
\ket{0  2  2}\;,\tag{\stepcounter{equation}\theequation}\\
\ket{\phi_{2^13^2,1}^{(\rn{9})}}=&\ket{0  0  0}+
\ket{1  1  1}+
\ket{1  1  2}+
\ket{1  2  0}+
3\ket{1  0  1}+
2\ket{0  1  0}+
3\ket{0  2  2}\;,\\
\ket{\phi_{2^13^2,1}^{(\rn{10})}}=&\ket{0  0  0}+
3\ket{1  1  1}+
3\ket{1  0  2}+
3\ket{0  2  0}+
\ket{0  2  1}+
\ket{0  1  2}\;,\\
\ket{\phi_{2^13^2,1}^{(\rn{11})}}=&4\ket{0  0  0}+
3\ket{1  1  1}+
3\ket{1  2  2}+
\ket{0  1  2}+
\ket{0  2  1}\;,\\
\ket{\phi_{2^13^2,1}^{(\rn{12})}}=&3\ket{0  0  0}+
\ket{1  2  0}+
\ket{1  0  1}+
2\ket{1  1  1}+
2\ket{1  1  2}+
\ket{0  2  1}+
2\ket{0  2  2}\;,\\
\ket{\phi_{2^13^2,1}^{(\rn{13})}}=&\ket{0  0  0}+
\ket{1  1  2}+
\ket{1  2  1}+
2\ket{1  1  0}+
2\ket{1  0  1}+
\ket{0  1  1}+
2\ket{0  2  2}+
\ket{0  0  2}+
\ket{0  2  0}\;,\\
\ket{\phi_{2^13^2,1}^{(\rn{14})}}=&2\ket{0  0  0}+
\ket{1  1  2}+
\ket{1  2  1}+
2\ket{1  1  0}+
2\ket{1  0  1}+
\ket{0  1  1}+
3\ket{0  2  2}\;,\\
\ket{\phi_{2^13^2,1}^{(\rn{15})}}=&\ket{0  0  0}+
2\ket{1  1  2}+
2\ket{1  2  1}+
\ket{1  1  0}+
\ket{1  0  1}+
\ket{0  1  1}+
2\ket{0  0  2}+
2\ket{0  2  0}\;.
\end{align*}

\section{Concluding remarks}\label{S7}
We investigated the combinatorial family  %$\mathcal{O\!A}(N,d,k)$ 
of quantum states composed of $N$ subsystems, with $d$ levels each,
which are based on fractional orthogonal designs consisting of $N$ columns
of symbols from a $d$-letter alphabet.
In particular, we formulated the entanglement classification problem 
within this restricted set and studied the entanglement classes corresponding 
to array-based states for a small number of parties. 

This approach provides a coarse-grained picture of the full classification of entanglement,
 as shown explicitly for the 3-qubit system. 
For a higher number of parties, we restricted our attention to 
the subfamily of states corresponding to generating orthogonal arrays.
In this way we identified several classes of highly entangled states, 
including all states based on maximum distance separable codes. 

The list of generating orthogonal arrays for systems of up to four qubits is provided 
in  Appendix \ref{app1}, for all possible values of the strength $k$. 
The methods described in this work are also applicable to heterogeneous systems,
made up of subsystems with different numbers of levels.
 The list of the generating arrays for systems of two qubits and one qutrit 
or one qubit and two qutrits, as well as for 5 qubits, is available online \cite{HBonline}.

Finally, we pose a list of open questions: \emph{(i)} Prove, or disprove, Conjecture \ref{conjecture}, concerning the structure of the Hilbert basis of the rational cone
for a class of orthogonal arrays with arbitrary number of columns.
 \emph{(ii)} Find a necessary and sufficient criterion based on 
entanglement theory to distinguish non-isomorphic classes of orthogonal arrays. 
\emph{(iii)} Generalize the method presented here to find the generating elements for quantum orthogonal arrays, introduced in Ref.~\cite{goyeneche2017entanglement}.

\section*{Acknowledgements}
We are grateful to Charles Colbourn, Robert Craigen and Jennifer Seberry for discussions on
 orthogonal arrays. DG and K\.{Z}  acknowledge support from Narodowe Centrum Nauki under grant number DEC-2015/18/A/ST2/00274. LS acknowledges support by EU through the collaborative H2020 project QuProCS (Grant Agreement 641277) and by UniMI through the H2020 Transition Grant. LS thanks DG and K\.{Z}  for the very kind hospitality during his stay, under the support of an Erasmus+ traineeship program, at the Jagiellonian University, where this work was conceived.

\appendix

\section{Hilbert bases for two to four qubits systems}\label{app1}
\subsection{Two qubits}
For a two-qubit system, the set of orthogonal arrays $\mathscr{O\!A}(2,2,1)$ is in one-to-one correspondence with the polytope $P(2,2,1)$. The generating states of the Hilbert basis of the cone $C(2,2,1)$ were reported in Eq.~\eqref{gen21}. They are the well-known Bell states and are repeated here for completeness, i.e.
\begin{equation}
\begin{split}
&|g_{2,2,1}^{(1)}\rangle=|00\rangle+|11\rangle\;,\qquad |g_{2,2,1}^{(2)}\rangle=|01\rangle+|10\rangle\;.
\end{split}
\end{equation}
Notice that we work with unnormalized states for simplicity. The corresponding orthogonal arrays are
\be
\begin{split}
&\text{OA$_{2,2,1}^{(1)}$}=\ba{cc}
0&0\\
1&1
\ea\;,\qquad\qquad \text{OA$_{2,2,1}^{(2)}$}=\ba{cc}
0&1\\
1&0
\ea\;.
\end{split}
\ee

\subsection{Three qubits}
For a 3-qubit system, the set of orthogonal arrays $\mathscr{O\!A}(3,2,1)$ is in one-to-one correspondence with the polytope $P(3,2,1)$. The generating states of the Hilbert basis of the cone $C(3,2,1)$ are:
\be\label{g31a1}
\begin{gathered}
\ket{g_{3,2,1}^{(1)}}=\ket{011}+\ket{100}\;,\qquad \ket{g_{3,2,1}^{(2)}}=\ket{010}+\ket{101}\;,\\
\ket{g_{3,2,1}^{(3)}}=\ket{001}+\ket{110}\;,\qquad \ket{g_{3,2,1}^{(4)}}=\ket{000}+\ket{111}\;,\\
\ket{g_{3,2,1}^{(5)}}=\ket{001}+\ket{010}+\ket{100}+\ket{111}\;,\qquad \ket{g_{3,2,1}^{(6)}}=\ket{000}+\ket{011}+\ket{101}+\ket{110}\;.\\
\end{gathered}
\ee
The corresponding orthogonal arrays are listed below.
\begin{align*}
\text{OA$_{3,2,1}^{(1)}$}=\begin{array}{ccc}
 0 & 1 & 1 \\
 1 & 0 & 0 \\
\end{array}\;,\qquad
\text{OA$^{(2)}_{3,2,1}$}=\begin{array}{ccc}
 0 & 1 & 0 \\
 1 & 0 & 1 \\
\end{array}\;,\qquad
\text{OA$_{3,2,1}^{(3)}$}=\begin{array}{ccc}
 0 & 0 & 1 \\
 1 & 1 & 0 \\
\end{array}\;,\qquad
\text{OA$_{3,2,1}^{(4)}$}=\begin{array}{ccc}
 0 & 0 & 0 \\
 1 & 1 & 1 \\
\end{array}\;,\\
\end{align*}
\vspace{-35pt}
\begin{align*}
\text{OA$_{3,2,1}^{(5)}$}=\begin{array}{ccc}
 0 & 0 & 1 \\
 0 & 1 & 0 \\
 1 & 0 & 0 \\
 1 & 1 & 1 \\
\end{array}\;,\qquad
\text{OA$_{3,2,1}^{(6)}$}=\begin{array}{ccc}
 0 & 0 & 0 \\
 0 & 1 & 1 \\
 1 & 0 & 1 \\
 1 & 1 & 0 \\
\end{array}\;.
\end{align*}
\hspace{0.5pt}

\normalsize
For strength 2, the set of orthogonal arrays $\mathscr{O\!A}(3,2,2)$ is in one-to-one correspondence with the polytope $P(3,2,2)$. The generating states of the Hilbert basis of the cone $C(3,2,2)$ are the last two elements shown in Eq.~\eqref{g31a1}, i.e. $\ket{g_{3,2,1}^{(5)}}$ and $\ket{g_{3,2,1}^{(6)}}$, so that
\be
\ket{g_{3,2,2}^{(1)}}=\ket{g_{3,2,1}^{(5)}}\;,\qquad\ket{g_{3,2,2}^{(2)}}=\ket{g_{3,2,1}^{(6)}}\;.
\ee 

\subsection{Four qubits}\label{fq}
For a 4-qubit system, the set of orthogonal arrays $\mathscr{O\!A}(4,2,1)$ is in one-to-one correspondence with the polytope $P(4,2,1)$. There are 48 generating states of the Hilbert basis of the cone $C(4,2,1)$,
\be 
\begin{gathered}
\ket{g_{4,2,1}^{(1)}}=\ket{0111}+\ket{1000}\;,\\
\ket{g_{4,2,1}^{(2)}}=\ket{0110}+\ket{1001}\;,\\
\vdots\\
\ket{g_{4,2,1}^{(48)}}=\ket{0000}+\ket{0000}+\ket{0111}+\ket{1011}+\ket{1101}+\ket{1110}
\;,\\
\end{gathered}
\ee
The generating states, which have been suppressed here for brevity, can be easily read out from the corresponding orthogonal arrays, listed below. 

\tiny
\begin{align*}
\text{OA$_{4,2,1}^{(1)}$}=
\begin{array}{cccc}
 0 & 1 & 1 & 1 \\
 1 & 0 & 0 & 0 \\
\end{array}
 \;,\qquad  
\text{OA$_{4,2,1}^{(2)}$}=
\begin{array}{cccc}
 0 & 1 & 1 & 0 \\
 1 & 0 & 0 & 1 \\
\end{array}
 \;,\qquad  
\text{OA$_{4,2,1}^{(3)}$}=
\begin{array}{cccc}
 0 & 1 & 0 & 1 \\
 1 & 0 & 1 & 0 \\
\end{array}
\;,\qquad  
\text{OA$_{4,2,1}^{(4)}$}=\begin{array}{cccc}
 0 & 1 & 0 & 0 \\
 1 & 0 & 1 & 1 \\
\end{array}
\;,\\
\end{align*}
\vspace{-35pt}
\begin{align*}  
\text{OA$_{4,2,1}^{(5)}$}=\begin{array}{cccc}
 0 & 0 & 1 & 1 \\
 1 & 1 & 0 & 0 \\
\end{array}
\;,\qquad
\text{OA$_{4,2,1}^{(6)}$}=\begin{array}{cccc}
 0 & 0 & 1 & 0 \\
 1 & 1 & 0 & 1 \\
\end{array}
 \;,\qquad 
 \text{OA$_{4,2,1}^{(7)}$}=\begin{array}{cccc}
 0 & 0 & 0 & 1 \\
 1 & 1 & 1 & 0 \\
\end{array}
 \;,\qquad   
\text{OA$_{4,2,1}^{(8)}$}=\begin{array}{cccc}
 0 & 0 & 0 & 0 \\
 1 & 1 & 1 & 1 \\
\end{array}
 \;,\\
 \end{align*}
\vspace{-35pt}
\begin{align*} 
\text{OA$_{4,2,1}^{(9)}$}=
\begin{array}{cccc}
 0 & 1 & 0 & 1 \\
 0 & 1 & 1 & 0 \\
 1 & 0 & 0 & 0 \\
 1 & 0 & 1 & 1 \\
\end{array}
 \;,\qquad  
\text{OA$_{4,2,1}^{(10)}$}=\begin{array}{cccc}
 0 & 1 & 0 & 0 \\
 0 & 1 & 1 & 1 \\
 1 & 0 & 0 & 1 \\
 1 & 0 & 1 & 0 \\
\end{array}
 \;,\qquad  
\text{OA$_{4,2,1}^{(11)}$}=\begin{array}{cccc}
 0 & 0 & 1 & 1 \\
 0 & 1 & 1 & 0 \\
 1 & 0 & 0 & 0 \\
 1 & 1 & 0 & 1 \\
\end{array}
 \;,\qquad  
\text{OA$_{4,2,1}^{(12)}$}=\begin{array}{cccc}
 0 & 0 & 1 & 1 \\
 0 & 1 & 0 & 1 \\
 1 & 0 & 0 & 0 \\
 1 & 1 & 1 & 0 \\
\end{array}
 \;,\\
 \end{align*}
\vspace{-35pt}
\begin{align*} 
 \text{OA$_{4,2,1}^{(13)}$}=\begin{array}{cccc}
 0 & 0 & 1 & 1 \\
 0 & 1 & 0 & 0 \\
 1 & 0 & 1 & 0 \\
 1 & 1 & 0 & 1 \\
\end{array}
 \;,\qquad 
\text{OA$_{4,2,1}^{(14)}$}=\begin{array}{cccc}
 0 & 0 & 1 & 1 \\
 0 & 1 & 0 & 0 \\
 1 & 0 & 0 & 1 \\
 1 & 1 & 1 & 0 \\
\end{array}
 \;,\qquad   
\text{OA$_{4,2,1}^{(15)}$}=\begin{array}{cccc}
 0 & 0 & 1 & 1 \\
 0 & 1 & 0 & 0 \\
 1 & 0 & 0 & 0 \\
 1 & 1 & 1 & 1 \\
\end{array}
 \;,\qquad 
 \text{OA$_{4,2,1}^{(16)}$}=\begin{array}{cccc}
 0 & 0 & 1 & 0 \\
 0 & 1 & 1 & 1 \\
 1 & 0 & 0 & 1 \\
 1 & 1 & 0 & 0 \\
\end{array}
 \;,\\ 
 \end{align*}
\vspace{-35pt}
\begin{align*}
\text{OA$_{4,2,1}^{(17)}$}=\begin{array}{cccc}
 0 & 0 & 1 & 0 \\
 0 & 1 & 0 & 1 \\
 1 & 0 & 1 & 1 \\
 1 & 1 & 0 & 0 \\
\end{array}
 \;,\qquad  
\text{OA$_{4,2,1}^{(18)}$}=\begin{array}{cccc}
 0 & 0 & 1 & 0 \\
 0 & 1 & 0 & 1 \\
 1 & 0 & 0 & 1 \\
 1 & 1 & 1 & 0 \\
\end{array}
 \;,\qquad 
\text{OA$_{4,2,1}^{(19)}$}=\begin{array}{cccc}
 0 & 0 & 1 & 0 \\
 0 & 1 & 0 & 1 \\
 1 & 0 & 0 & 0 \\
 1 & 1 & 1 & 1 \\
\end{array}
 \;,\qquad  
\text{OA$_{4,2,1}^{(20)}$}=\begin{array}{cccc}
 0 & 0 & 1 & 0 \\
 0 & 1 & 0 & 0 \\
 1 & 0 & 0 & 1 \\
 1 & 1 & 1 & 1 \\
\end{array}
 \;,\\ 
 \end{align*}
\vspace{-35pt}
\begin{align*}
\text{OA$_{4,2,1}^{(21)}$}=\begin{array}{cccc}
 0 & 0 & 0 & 1 \\
 0 & 1 & 1 & 1 \\
 1 & 0 & 1 & 0 \\
 1 & 1 & 0 & 0 \\
\end{array}
 \;,\qquad  
\text{OA$_{4,2,1}^{(22)}$}=\begin{array}{cccc}
 0 & 0 & 0 & 1 \\
 0 & 1 & 1 & 0 \\
 1 & 0 & 1 & 1 \\
 1 & 1 & 0 & 0 \\
\end{array}
 \;,\qquad 
\text{OA$_{4,2,1}^{(23)}$}=\begin{array}{cccc}
 0 & 0 & 0 & 1 \\
 0 & 1 & 1 & 0 \\
 1 & 0 & 1 & 0 \\
 1 & 1 & 0 & 1 \\
\end{array}
 \;,\qquad  
\text{OA$_{4,2,1}^{(24)}$}=\begin{array}{cccc}
 0 & 0 & 0 & 1 \\
 0 & 1 & 1 & 0 \\
 1 & 0 & 0 & 0 \\
 1 & 1 & 1 & 1 \\
\end{array}
 \;,\\
 \end{align*}
\vspace{-35pt}
\begin{align*}
 \text{OA$_{4,2,1}^{(25)}$}=\begin{array}{cccc}
 0 & 0 & 0 & 1 \\
 0 & 1 & 0 & 0 \\
 1 & 0 & 1 & 0 \\
 1 & 1 & 1 & 1 \\
\end{array}
 \;,\qquad 
 \text{OA$_{4,2,1}^{(26)}$}=\begin{array}{cccc}
 0 & 0 & 0 & 1 \\
 0 & 0 & 1 & 0 \\
 1 & 1 & 0 & 0 \\
 1 & 1 & 1 & 1 \\
\end{array}
 \;,\qquad  
 \text{OA$_{4,2,1}^{(27)}$}=\begin{array}{cccc}
 0 & 0 & 0 & 0 \\
 0 & 1 & 1 & 1 \\
 1 & 0 & 1 & 1 \\
 1 & 1 & 0 & 0 \\
\end{array}
 \;,\qquad 
 \text{OA$_{4,2,1}^{(28)}$}=\begin{array}{cccc}
 0 & 0 & 0 & 0 \\
 0 & 1 & 1 & 1 \\
 1 & 0 & 1 & 0 \\
 1 & 1 & 0 & 1 \\
\end{array}
 \;,\\
 \end{align*}
\vspace{-35pt}
\begin{align*}   
\text{OA$_{4,2,1}^{(29)}$}=\begin{array}{cccc}
 0 & 0 & 0 & 0 \\
 0 & 1 & 1 & 1 \\
 1 & 0 & 0 & 1 \\
 1 & 1 & 1 & 0 \\
\end{array}
 \;,\qquad 
\text{OA$_{4,2,1}^{(30)}$}=\begin{array}{cccc}
 0 & 0 & 0 & 0 \\
 0 & 1 & 1 & 0 \\
 1 & 0 & 1 & 1 \\
 1 & 1 & 0 & 1 \\
\end{array}
 \;,\qquad  
\text{OA$_{4,2,1}^{(31)}$}=\begin{array}{cccc}
 0 & 0 & 0 & 0 \\
 0 & 1 & 0 & 1 \\
 1 & 0 & 1 & 1 \\
 1 & 1 & 1 & 0 \\
\end{array}
 \;,\qquad  
\text{OA$_{4,2,1}^{(32)}$}=\begin{array}{cccc}
 0 & 0 & 0 & 0 \\
 0 & 0 & 1 & 1 \\
 1 & 1 & 0 & 1 \\
 1 & 1 & 1 & 0 \\
\end{array}
 \;,\\
 \end{align*}
\vspace{-35pt}
\begin{align*}
\text{OA$_{4,2,1}^{(33)}$}=\begin{array}{cccc}
 0 & 0 & 1 & 1 \\
 0 & 1 & 0 & 1 \\
 0 & 1 & 1 & 0 \\
 1 & 0 & 0 & 0 \\
 1 & 0 & 0 & 0 \\
 1 & 1 & 1 & 1 \\
\end{array}
 \;,\qquad  
\text{OA$_{4,2,1}^{(34)}$}=\begin{array}{cccc}
 0 & 0 & 1 & 1 \\
 0 & 1 & 0 & 0 \\
 0 & 1 & 0 & 0 \\
 1 & 0 & 0 & 1 \\
 1 & 0 & 1 & 0 \\
 1 & 1 & 1 & 1 \\
\end{array}
 \;,\qquad   
\text{OA$_{4,2,1}^{(35)}$}=\begin{array}{cccc}
 0 & 0 & 1 & 1 \\
 0 & 0 & 1 & 1 \\
 0 & 1 & 0 & 0 \\
 1 & 0 & 0 & 0 \\
 1 & 1 & 0 & 1 \\
 1 & 1 & 1 & 0 \\
\end{array}
 \;,\qquad  
\text{OA$_{4,2,1}^{(36)}$}=\begin{array}{cccc}
 0 & 0 & 1 & 0 \\
 0 & 1 & 0 & 1 \\
 0 & 1 & 0 & 1 \\
 1 & 0 & 0 & 0 \\
 1 & 0 & 1 & 1 \\
 1 & 1 & 1 & 0 \\
\end{array}
 \;,\\ 
 \end{align*}
\vspace{-35pt}
\begin{align*}  
\text{OA$_{4,2,1}^{(37)}$}=\begin{array}{cccc}
 0 & 0 & 1 & 0 \\
 0 & 1 & 0 & 0 \\
 0 & 1 & 1 & 1 \\
 1 & 0 & 0 & 1 \\
 1 & 0 & 0 & 1 \\
 1 & 1 & 1 & 0 \\
\end{array}
 \;,\qquad  
\text{OA$_{4,2,1}^{(38)}$}=\begin{array}{cccc}
 0 & 0 & 1 & 0 \\
 0 & 0 & 1 & 0 \\
 0 & 1 & 0 & 1 \\
 1 & 0 & 0 & 1 \\
 1 & 1 & 0 & 0 \\
 1 & 1 & 1 & 1 \\
\end{array}
 \;,\qquad     
\text{OA$_{4,2,1}^{(39)}$}=\begin{array}{cccc}
 0 & 0 & 0 & 1 \\
 0 & 1 & 1 & 0 \\
 0 & 1 & 1 & 0 \\
 1 & 0 & 0 & 0 \\
 1 & 0 & 1 & 1 \\
 1 & 1 & 0 & 1 \\
\end{array}
 \;,\qquad  
\text{OA$_{4,2,1}^{(40)}$}=\begin{array}{cccc}
 0 & 0 & 0 & 1 \\
 0 & 1 & 0 & 0 \\
 0 & 1 & 1 & 1 \\
 1 & 0 & 1 & 0 \\
 1 & 0 & 1 & 0 \\
 1 & 1 & 0 & 1 \\
\end{array}
 \;,\\ 
 \end{align*}
\vspace{-35pt}
\begin{align*}   
\text{OA$_{4,2,1}^{(41)}$}=\begin{array}{cccc}
 0 & 0 & 0 & 1 \\
 0 & 0 & 1 & 0 \\
 0 & 1 & 1 & 1 \\
 1 & 0 & 1 & 1 \\
 1 & 1 & 0 & 0 \\
 1 & 1 & 0 & 0 \\
\end{array}
 \;,\qquad  
\text{OA$_{4,2,1}^{(42)}$}=\begin{array}{cccc}
 0 & 0 & 0 & 1 \\
 0 & 0 & 1 & 0 \\
 0 & 1 & 0 & 0 \\
 1 & 0 & 0 & 0 \\
 1 & 1 & 1 & 1 \\
 1 & 1 & 1 & 1 \\
\end{array}
 \;,\qquad 
\text{OA$_{4,2,1}^{(43)}$}=\begin{array}{cccc}
 0 & 0 & 0 & 1 \\
 0 & 0 & 0 & 1 \\
 0 & 1 & 1 & 0 \\
 1 & 0 & 1 & 0 \\
 1 & 1 & 0 & 0 \\
 1 & 1 & 1 & 1 \\
\end{array}
 \;,\qquad     
\text{OA$_{4,2,1}^{(44)}$}=\begin{array}{cccc}
 0 & 0 & 0 & 0 \\
 0 & 1 & 1 & 1 \\
 0 & 1 & 1 & 1 \\
 1 & 0 & 0 & 1 \\
 1 & 0 & 1 & 0 \\
 1 & 1 & 0 & 0 \\
\end{array}
 \;,\\ 
 \end{align*}
\vspace{-35pt}
\begin{align*}  
\text{OA$_{4,2,1}^{(45)}$}=\begin{array}{cccc}
 0 & 0 & 0 & 0 \\
 0 & 1 & 0 & 1 \\
 0 & 1 & 1 & 0 \\
 1 & 0 & 1 & 1 \\
 1 & 0 & 1 & 1 \\
 1 & 1 & 0 & 0 \\
\end{array}
 \;,\qquad 
\text{OA$_{4,2,1}^{(46)}$}=\begin{array}{cccc}
 0 & 0 & 0 & 0 \\
 0 & 0 & 1 & 1 \\
 0 & 1 & 1 & 0 \\
 1 & 0 & 1 & 0 \\
 1 & 1 & 0 & 1 \\
 1 & 1 & 0 & 1 \\
\end{array}
\;,\qquad   
\text{OA$_{4,2,1}^{(47)}$}=\begin{array}{cccc}
0 & 0 & 0 & 0 \\
0 & 0 & 1 & 1 \\
0 & 1 & 0 & 1 \\
1 & 0 & 0 & 1 \\
1 & 1 & 1 & 0 \\
1 & 1 & 1 & 0 \\
\end{array}
\;,\qquad  
\text{OA$_{4,2,1}^{(48)}$}=\begin{array}{cccc}
0 & 0 & 0 & 0 \\
0 & 0 & 0 & 0 \\
0 & 1 & 1 & 1 \\
1 & 0 & 1 & 1 \\
1 & 1 & 0 & 1 \\
1 & 1 & 1 & 0 \\
\end{array}\;.
\end{align*}
\hspace{5pt}\\
\normalsize
For strength 2, the set of orthogonal arrays $\mathscr{O\!A}(4,2,2)$ is in one-to-one correspondence with the polytope $P(4,2,2)$. The generating states of the Hilbert basis of the cone $C(4,2,2)$ are 
\be
\begin{gathered}
\ket{g_{4,2,2}^{(1)}}=\ket{0010}+\ket{0011}+\ket{0100}+\ket{0101}+\ket{1000}+\ket{1001}
+\ket{1110}+\ket{1111}\;,\\
\ket{g_{4,2,2}^{(2)}}=\ket{0001}+\ket{0011}+\ket{0100}+\ket{0110}+\ket{1000}+\ket{1010}+\ket{1101}+\ket{1111}\;,\\
\vdots\\
\ket{g_{4,2,2}^{(26)}}=\ket{0000}+\ket{0000}+\ket{0011}+\ket{0101}+\ket{0110}+\ket{0111}+\ket{1001}+\ket{1010}\\ +\ket{1011}+\ket{1100}+\ket{1101}+\ket{1110}\;.\\
\end{gathered}
\ee

The corresponding orthogonal arrays are listed below.

\tiny
\begin{align*}
\text{OA$_{4,2,2}^{(1)}$}=
\begin{array}{cccc}
 0 & 0 & 1 & 0 \\
 0 & 0 & 1 & 1 \\
 0 & 1 & 0 & 0 \\
 0 & 1 & 0 & 1 \\
 1 & 0 & 0 & 0 \\
 1 & 0 & 0 & 1 \\
 1 & 1 & 1 & 0 \\
 1 & 1 & 1 & 1 \\
\end{array}
\;,\qquad
\text{OA$_{4,2,2}^{(2)}$}=\begin{array}{cccc}
 0 & 0 & 0 & 1 \\
 0 & 0 & 1 & 1 \\
 0 & 1 & 0 & 0 \\
 0 & 1 & 1 & 0 \\
 1 & 0 & 0 & 0 \\
 1 & 0 & 1 & 0 \\
 1 & 1 & 0 & 1 \\
 1 & 1 & 1 & 1 \\
\end{array}
\;,\qquad
\text{OA$_{4,2,2}^{(3)}$}=\begin{array}{cccc}
 0 & 0 & 0 & 1 \\
 0 & 0 & 1 & 0 \\
 0 & 1 & 0 & 1 \\
 0 & 1 & 1 & 0 \\
 1 & 0 & 0 & 0 \\
 1 & 0 & 1 & 1 \\
 1 & 1 & 0 & 0 \\
 1 & 1 & 1 & 1 \\
\end{array}
\;,\qquad
\text{OA$_{4,2,2}^{(4)}$}=\begin{array}{cccc}
 0 & 0 & 0 & 1 \\
 0 & 0 & 1 & 0 \\
 0 & 1 & 0 & 0 \\
 0 & 1 & 1 & 1 \\
 1 & 0 & 0 & 1 \\
 1 & 0 & 1 & 0 \\
 1 & 1 & 0 & 0 \\
 1 & 1 & 1 & 1 \\
\end{array}
\;,\qquad
\text{OA$_{4,2,2}^{(5)}$}=\begin{array}{cccc}
 0 & 0 & 0 & 1 \\
 0 & 0 & 1 & 0 \\
 0 & 1 & 0 & 0 \\
 0 & 1 & 1 & 1 \\
 1 & 0 & 0 & 0 \\
 1 & 0 & 1 & 1 \\
 1 & 1 & 0 & 1 \\
 1 & 1 & 1 & 0 \\
\end{array}
\;,\\
\end{align*}
\vspace{-35pt}
\begin{align*}
\text{OA$_{4,2,2}^{(6)}$}=\begin{array}{cccc}
 0 & 0 & 0 & 0 \\
 0 & 0 & 1 & 1 \\
 0 & 1 & 0 & 1 \\
 0 & 1 & 1 & 0 \\
 1 & 0 & 0 & 1 \\
 1 & 0 & 1 & 0 \\
 1 & 1 & 0 & 0 \\
 1 & 1 & 1 & 1 \\
\end{array}
\;,\qquad
\text{OA$_{4,2,2}^{(7)}$}=\begin{array}{cccc}
 0 & 0 & 0 & 0 \\
 0 & 0 & 1 & 1 \\
 0 & 1 & 0 & 1 \\
 0 & 1 & 1 & 0 \\
 1 & 0 & 0 & 0 \\
 1 & 0 & 1 & 1 \\
 1 & 1 & 0 & 1 \\
 1 & 1 & 1 & 0 \\
\end{array}
\;,\qquad
\text{OA$_{4,2,2}^{(8)}$}=\begin{array}{cccc}
 0 & 0 & 0 & 0 \\
 0 & 0 & 1 & 1 \\
 0 & 1 & 0 & 0 \\
 0 & 1 & 1 & 1 \\
 1 & 0 & 0 & 1 \\
 1 & 0 & 1 & 0 \\
 1 & 1 & 0 & 1 \\
 1 & 1 & 1 & 0 \\
\end{array}
\;,\qquad
\text{OA$_{4,2,2}^{(9)}$}=\begin{array}{cccc}
 0 & 0 & 0 & 0 \\
 0 & 0 & 1 & 0 \\
 0 & 1 & 0 & 1 \\
 0 & 1 & 1 & 1 \\
 1 & 0 & 0 & 1 \\
 1 & 0 & 1 & 1 \\
 1 & 1 & 0 & 0 \\
 1 & 1 & 1 & 0 \\
\end{array}
\;,\qquad
\text{OA$_{4,2,2}^{(10)}$}=\begin{array}{cccc}
 0 & 0 & 0 & 0 \\
 0 & 0 & 0 & 1 \\
 0 & 1 & 1 & 0 \\
 0 & 1 & 1 & 1 \\
 1 & 0 & 1 & 0 \\
 1 & 0 & 1 & 1 \\
 1 & 1 & 0 & 0 \\
 1 & 1 & 0 & 1 \\
\end{array}
\;,\\
\end{align*}
\vspace{-35pt}
\begin{align*}
\text{OA$_{4,2,2}^{(11)}$}=\begin{array}{cccc}
 0 & 0 & 0 & 1 \\
 0 & 0 & 1 & 0 \\
 0 & 0 & 1 & 1 \\
 0 & 1 & 0 & 0 \\
 0 & 1 & 0 & 1 \\
 0 & 1 & 1 & 0 \\
 1 & 0 & 0 & 0 \\
 1 & 0 & 0 & 1 \\
 1 & 0 & 1 & 0 \\
 1 & 1 & 0 & 0 \\
 1 & 1 & 1 & 1 \\
 1 & 1 & 1 & 1 \\
\end{array}
\;,\qquad
\text{OA$_{4,2,2}^{(12)}$}=\begin{array}{cccc} 
 0 & 0 & 0 & 1 \\
 0 & 0 & 1 & 0 \\
 0 & 0 & 1 & 1 \\
 0 & 1 & 0 & 0 \\
 0 & 1 & 0 & 1 \\
 0 & 1 & 1 & 0 \\
 1 & 0 & 0 & 0 \\
 1 & 0 & 0 & 0 \\
 1 & 0 & 1 & 1 \\
 1 & 1 & 0 & 1 \\
 1 & 1 & 1 & 0 \\
 1 & 1 & 1 & 1 \\
\end{array}
\;,\qquad
\text{OA$_{4,2,2}^{(13)}$}=\begin{array}{cccc} 
 0 & 0 & 0 & 1 \\
 0 & 0 & 1 & 0 \\
 0 & 0 & 1 & 1 \\
 0 & 1 & 0 & 0 \\
 0 & 1 & 0 & 0 \\
 0 & 1 & 1 & 1 \\
 1 & 0 & 0 & 0 \\
 1 & 0 & 0 & 1 \\
 1 & 0 & 1 & 0 \\
 1 & 1 & 0 & 1 \\
 1 & 1 & 1 & 0 \\
 1 & 1 & 1 & 1 \\
\end{array}
\;,\qquad
\text{OA$_{4,2,2}^{(14)}$}=\begin{array}{cccc} 
 0 & 0 & 0 & 1 \\
 0 & 0 & 1 & 0 \\
 0 & 0 & 1 & 0 \\
 0 & 1 & 0 & 0 \\
 0 & 1 & 0 & 1 \\
 0 & 1 & 1 & 1 \\
 1 & 0 & 0 & 0 \\
 1 & 0 & 0 & 1 \\
 1 & 0 & 1 & 1 \\
 1 & 1 & 0 & 0 \\
 1 & 1 & 1 & 0 \\
 1 & 1 & 1 & 1 \\
\end{array}
\;,\\
\end{align*}
\vspace{-35pt}
\begin{align*}
\text{OA$_{4,2,2}^{(15)}$}=\begin{array}{cccc} 
 0 & 0 & 0 & 1 \\
 0 & 0 & 0 & 1 \\
 0 & 0 & 1 & 0 \\
 0 & 1 & 0 & 0 \\
 0 & 1 & 1 & 0 \\
 0 & 1 & 1 & 1 \\
 1 & 0 & 0 & 0 \\
 1 & 0 & 1 & 0 \\
 1 & 0 & 1 & 1 \\
 1 & 1 & 0 & 0 \\
 1 & 1 & 0 & 1 \\
 1 & 1 & 1 & 1 \\
\end{array}
\;,\qquad
\text{OA$_{4,2,2}^{(16)}$}=\begin{array}{cccc} 
 0 & 0 & 0 & 0 \\
 0 & 0 & 1 & 1 \\
 0 & 0 & 1 & 1 \\
 0 & 1 & 0 & 0 \\
 0 & 1 & 0 & 1 \\
 0 & 1 & 1 & 0 \\
 1 & 0 & 0 & 0 \\
 1 & 0 & 0 & 1 \\
 1 & 0 & 1 & 0 \\
 1 & 1 & 0 & 1 \\
 1 & 1 & 1 & 0 \\
 1 & 1 & 1 & 1 \\
\end{array}
\;,\qquad
\text{OA$_{4,2,2}^{(17)}$}=\begin{array}{cccc} 
 0 & 0 & 0 & 0 \\
 0 & 0 & 1 & 0 \\
 0 & 0 & 1 & 1 \\
 0 & 1 & 0 & 1 \\
 0 & 1 & 0 & 1 \\
 0 & 1 & 1 & 0 \\
 1 & 0 & 0 & 0 \\
 1 & 0 & 0 & 1 \\
 1 & 0 & 1 & 1 \\
 1 & 1 & 0 & 0 \\
 1 & 1 & 1 & 0 \\
 1 & 1 & 1 & 1 \\
\end{array}
\;,\qquad
\text{OA$_{4,2,2}^{(18)}$}=\begin{array}{cccc} 
 0 & 0 & 0 & 0 \\
 0 & 0 & 1 & 0 \\
 0 & 0 & 1 & 1 \\
 0 & 1 & 0 & 0 \\
 0 & 1 & 0 & 1 \\
 0 & 1 & 1 & 1 \\
 1 & 0 & 0 & 1 \\
 1 & 0 & 0 & 1 \\
 1 & 0 & 1 & 0 \\
 1 & 1 & 0 & 0 \\
 1 & 1 & 1 & 0 \\
 1 & 1 & 1 & 1 \\
\end{array}
\;,\\
\end{align*}
\vspace{-35pt}
\begin{align*}
\text{OA$_{4,2,2}^{(19)}$}=\begin{array}{cccc} 
 0 & 0 & 0 & 0 \\
 0 & 0 & 1 & 0 \\
 0 & 0 & 1 & 1 \\
 0 & 1 & 0 & 0 \\
 0 & 1 & 0 & 1 \\
 0 & 1 & 1 & 1 \\
 1 & 0 & 0 & 0 \\
 1 & 0 & 0 & 1 \\
 1 & 0 & 1 & 1 \\
 1 & 1 & 0 & 1 \\
 1 & 1 & 1 & 0 \\
 1 & 1 & 1 & 0 \\
\end{array}
\;,\qquad
\text{OA$_{4,2,2}^{(20)}$}=\begin{array}{cccc} 
 0 & 0 & 0 & 0 \\
 0 & 0 & 0 & 1 \\
 0 & 0 & 1 & 1 \\
 0 & 1 & 0 & 1 \\
 0 & 1 & 1 & 0 \\
 0 & 1 & 1 & 0 \\
 1 & 0 & 0 & 0 \\
 1 & 0 & 1 & 0 \\
 1 & 0 & 1 & 1 \\
 1 & 1 & 0 & 0 \\
 1 & 1 & 0 & 1 \\
 1 & 1 & 1 & 1 \\
\end{array}
\;,\qquad
\text{OA$_{4,2,2}^{(21)}$}=\begin{array}{cccc} 
 0 & 0 & 0 & 0 \\
 0 & 0 & 0 & 1 \\
 0 & 0 & 1 & 1 \\
 0 & 1 & 0 & 0 \\
 0 & 1 & 1 & 0 \\
 0 & 1 & 1 & 1 \\
 1 & 0 & 0 & 1 \\
 1 & 0 & 1 & 0 \\
 1 & 0 & 1 & 0 \\
 1 & 1 & 0 & 0 \\
 1 & 1 & 0 & 1 \\
 1 & 1 & 1 & 1 \\
\end{array}
\;,\qquad
\text{OA$_{4,2,2}^{(22)}$}=\begin{array}{cccc} 
 0 & 0 & 0 & 0 \\
 0 & 0 & 0 & 1 \\
 0 & 0 & 1 & 1 \\
 0 & 1 & 0 & 0 \\
 0 & 1 & 1 & 0 \\
 0 & 1 & 1 & 1 \\
 1 & 0 & 0 & 0 \\
 1 & 0 & 1 & 0 \\
 1 & 0 & 1 & 1 \\
 1 & 1 & 0 & 1 \\
 1 & 1 & 0 & 1 \\
 1 & 1 & 1 & 0 \\
\end{array}
\;,\\
\end{align*}
\vspace{-35pt}
\begin{align*}
\text{OA$_{4,2,2}^{(23)}$}=\begin{array}{cccc} 
 0 & 0 & 0 & 0 \\
 0 & 0 & 0 & 1 \\
 0 & 0 & 1 & 0 \\
 0 & 1 & 0 & 1 \\
 0 & 1 & 1 & 0 \\
 0 & 1 & 1 & 1 \\
 1 & 0 & 0 & 1 \\
 1 & 0 & 1 & 0 \\
 1 & 0 & 1 & 1 \\
 1 & 1 & 0 & 0 \\
 1 & 1 & 0 & 0 \\
 1 & 1 & 1 & 1 \\
\end{array}
\;,\qquad
\text{OA$_{4,2,2}^{(24)}$}=\begin{array}{cccc} 
 0 & 0 & 0 & 0 \\
 0 & 0 & 0 & 1 \\
 0 & 0 & 1 & 0 \\
 0 & 1 & 0 & 1 \\
 0 & 1 & 1 & 0 \\
 0 & 1 & 1 & 1 \\
 1 & 0 & 0 & 0 \\
 1 & 0 & 1 & 1 \\
 1 & 0 & 1 & 1 \\
 1 & 1 & 0 & 0 \\
 1 & 1 & 0 & 1 \\
 1 & 1 & 1 & 0 \\
\end{array}
\;,\qquad
\text{OA$_{4,2,2}^{(25)}$}=\begin{array}{cccc} 
 0 & 0 & 0 & 0 \\
 0 & 0 & 0 & 1 \\
 0 & 0 & 1 & 0 \\
 0 & 1 & 0 & 0 \\
 0 & 1 & 1 & 1 \\
 0 & 1 & 1 & 1 \\
 1 & 0 & 0 & 1 \\
 1 & 0 & 1 & 0 \\
 1 & 0 & 1 & 1 \\
 1 & 1 & 0 & 0 \\
 1 & 1 & 0 & 1 \\
 1 & 1 & 1 & 0 \\
\end{array}
\;,\qquad
\text{OA$_{4,2,2}^{(26)}$}=\begin{array}{cccc}
 0 & 0 & 0 & 0 \\
 0 & 0 & 0 & 0 \\
 0 & 0 & 1 & 1 \\
 0 & 1 & 0 & 1 \\
 0 & 1 & 1 & 0 \\
 0 & 1 & 1 & 1 \\
 1 & 0 & 0 & 1 \\
 1 & 0 & 1 & 0 \\
 1 & 0 & 1 & 1 \\
 1 & 1 & 0 & 0 \\
 1 & 1 & 0 & 1 \\
 1 & 1 & 1 & 0 \\
\end{array}\;.
\end{align*}

\normalsize
For strength 3, the set of orthogonal arrays $\mathscr{O\!A}(4,2,3)$ is in one-to-one correspondence with the polytope $P(4,2,3)$. The generating states of the Hilbert basis of the cone $ C(4,2,3)$ are $\ket{g_{4,2,2}^{(5)}}$ and $\ket{g_{4,2,2}^{(6)}}$ given explicitly above, i.e.
\be
\ket{g_{4,2,3}^{(1)}}=\ket{g_{4,2,2}^{(5)}} \;,\qquad \ket{g_{4,2,3}^{(2)}}= \ket{g_{4,2,2}^{(6)}}\;. 
\ee  

\section{Polynomial invariants for 4-qubit systems}\label{App2}
In this appendix, we compute a complete family of 4-qubit invariants for the generating states belonging to the sets $\mathcal G(4,2,k)$, with $k=1,\,2,\,3$. There is an infinite number of entanglement classes for four qubits in the SLOCC classification, which can be organized into nine continuous families \cite{VDMV02}. Of the nine families, only one family is generic. States belonging to the generic family can be distinguished because their hyperdeterminant $\Delta$ is nonvanishing \cite{HLT17}. The other eight families, having vanishing hyperdeterminant, belong to a set of measure zero in the 4-qubit Hilbert space. It is possible to discriminate among them by considering the following set of entanglement invariants (see Tables IV to VI of Ref.~\cite{HLT17}): the Cayley invariant $H$ of degree two, the invariants $L$ and $M$ of degree four and the invariant $D_{xy}$ of degree six \cite{LT03}. The degree three invariant $T$ is known as the \emph{catalecticant} \cite{P99}, while the invariants $P$, $S_1$, $S_2$ and $S_3$ are defined in Eq.~$(31)$ of Ref.~\cite{HLT17}). 

{\fontsize{6}{8}\selectfont
\begin{center}
\begin{tabular}{c|c}
\{$N=4$,$k=1$,$i=1-48$\}&\{$\Delta$, $H$, $T$, $L$, $P$, $M$, $D_{xy}$, $S_1$, $S_2$, $S_3$\}\\ \hline
\{4,1,1\} &\{0,25/192,-253/13824,0,-1/8,0,0,1/4,1/4,1/4\}\\
\{4,1,2\} &\{0,1/192,-1/13824,0,0,0,0,1/4,1/4,1/4\}\\
\{4,1,3\} &\{0,1/192,-1/13824,0,0,0,0,1/4,1/4,1/4\}\\
\{4,1,4\} &\{0,1/192,-1/13824,0,0,0,0,1/4,1/4,1/4\}\\
\{4,1,5\} &\{0,1/192,-1/13824,0,0,0,0,1/4,1/4,1/4\}\\
\{4,1,6\} &\{0,1/192,-1/13824,0,0,0,0,1/4,1/4,1/4\}\\
\{4,1,7\} &\{0,1/192,-1/13824,0,0,0,0,1/4,1/4,1/4\}\\
\{4,1,8\} &\{0,1/192,-1/13824,0,0,0,0,1/4,1/4,1/4\}\\
\{4,1,9\} &\{0,0,0,0,0,0,0,0,0,0\}\\
\{4,1,10\} &\{0,1/192,-1/13824,0,0,1/16,0,1/4,0,-1/4\}\\
\{4,1,11\} &\{0,1/192,-1/8704,-1/16,0,0,0,0,1/4,0\}\\
\{4,1,12\} &\{0,1/192,-7/235008,1/16,0,-1/16,0,-1/4,-1/4,1/4\}\\
\{4,1,13\} &\{0,1/192,-7/235008,1/16,0,-1/16,0,-1/4,-1/4,1/4\}\\
\{4,1,14\} &\{0,1/192,-1/8704,-1/16,0,0,0,0,1/4,0\}\\
\{4,1,15\} &\{0,1/192,-1/13824,0,0,1/16,0,1/4,0,-1/4\}\\
\{4,1,16\} &\{0,1/192,-1/8704,-1/16,0,0,0,0,1/4,0\}\\
\{4,1,17\} &\{0,1/192,1/8704,1/16,0,0,0,0,-1/4,0\}\\
\{4,1,18\} &\{0,1/192,-1/13824,0,0,1/16,0,1/4,0,-1/4\}\\
\{4,1,19\} &\{0,1/192,-1/8704,-1/16,0,0,0,0,1/4,0\}\\
\{4,1,20\} &\{0,1/192,-7/235008,1/16,0,-1/16,0,-1/4,-1/4,1/4\}\\
\{4,1,21\} &\{0,1/192,-7/235008,1/16,0,-1/16,0,-1/4,-1/4,1/4\}\\
\{4,1,22\} &\{0,1/192,-1/8704,-1/16,0,0,0,0,1/4,0\}\\
\{4,1,23\} &\{0,1/192,-1/13824,0,0,1/16,0,1/4,0,-1/4\}\\
\{4,1,24\} &\{0,1/192,-7/235008,1/16,0,-1/16,0,-1/4,-1/4,1/4\}\\
\{4,1,25\} &\{0,1/192,-1/8704,-1/16,0,0,0,0,1/4,0\}\\
\{4,1,26\} &\{0,1/192,-1/13824,0,0,1/16,0,1/4,0,-1/4\}\\
\{4,1,27\} &\{0,0,0,0,0,0,0,0,0,0\}\\
\{4,1,28\} &\{0,1/192,-1/8704,-1/16,0,0,0,0,1/4,0\}\\
\{4,1,29\} &\{0,1/192,-7/235008,1/16,0,-1/16,0,-1/4,-1/4,1/4\}\\
\{4,1,30\} &\{0,1/192,1/8704,1/16,0,0,0,0,-1/4,0\}\\
\{4,1,31\} &\{0,1/192,-1/8704,-1/16,0,0,0,0,1/4,0\}\\
\{4,1,32\} &\{0,1/192,-1/13824,0,0,1/16,0,1/4,0,-1/4\}\\
\{4,1,33\} &\{-27/268435456,0,0,0,0,0,1/8,0,0,0\}\\
\{4,1,34\} &\{-27/268435456,0,-1/64,0,1/8,0,1/8,0,0,0\}\\
\{4,1,35\} &\{-27/268435456,0,-1/64,0,1/8,0,-1/8,0,0,0\}\\
\{4,1,36\} &\{-27/268435456,0,-1/64,0,-1/8,0,-1/8,0,0,0\}\\
\{4,1,37\} &\{-27/268435456,0,-1/64,0,-1/8,0,-1/8,0,0,0\}\\
\{4,1,38\} &\{-27/268435456,1/768,-1727/110592,0,-1/8,-1/32,1/8,-1/8,0,1/8\}\\
\{4,1,39\} &\{-27/268435456,0,-1/64,0,1/8,0,-1/8,0,0,0\}\\
\{4,1,40\} &\{-27/268435456,0,-1/64,0,-1/8,0,-1/8,0,0,0\}\\
\{4,1,41\} &\{-27/268435456,0,-1/64,0,-1/8,0,-1/8,0,0,0\}\\
\{4,1,42\} &\{-27/268435456,0,-1/64,0,-1/8,0,-1/8,0,0,0\}\\
\{4,1,43\} &\{-27/268435456,0,-1/64,0,-1/8,0,1/8,0,0,0\}\\
\{4,1,44\} &\{-27/268435456,1/768,-1729/110592,0,1/8,1/32,1/8,1/8,0,-1/8\}\\
\{4,1,45\} &\{-27/268435456,0,-1/64,0,1/8,0,1/8,0,0,0\}\\
\{4,1,46\} &\{-27/268435456,0,-1/64,0,1/8,0,1/8,0,0,0\}\\
\{4,1,47\} &\{-27/268435456,0,-1/64,0,1/8,0,1/8,0,0,0\}\\
\{4,1,48\} &\{-27/268435456,0,-1/64,0,1/8,0,-1/8,0,0,0\}
\end{tabular}
\end{center}

\begin{center}
\begin{tabular}{c|c}
\{$N=4$,$k=2$,$i=1-26$\}&\{$\Delta$, $H$, $T$, $L$, $P$, $M$, $D_{xy}$, $S_1$, $S_2$, $S_3$\}\\ \hline
\{4,2,1\} & \{0,0,-1/64,0,-1/8,0,0,0,0,0\}\\
\{4,2,2\} & \{0,0,0,0,0,0,0,0,0,0\}\\
\{4,2,3\} & \{0,0,0,0,0,0,0,0,0,0\}\\
\{4,2,4\} & \{0,0,0,0,0,0,0,0,0,0\}\\
\{4,2,5\} & \{0,1/192,-1/13824,0,0,0,0,1/4,1/4,1/4\}\\
\{4,2,6\} & \{0,1/192,-1/13824,0,0,0,0,1/4,1/4,1/4\}\\
\{4,2,7\} & \{0,0,0,0,0,0,0,0,0,0\}\\
\{4,2,8\} & \{0,0,0,0,0,0,0,0,0,0\}\\
\{4,2,9\} & \{0,0,0,0,0,0,0,0,0,0\}\\
\{4,2,10\} & \{0,0,0,0,0,0,0,0,0,0\}\\
\{4,2,11\} & \{-19683/7086739046912,27/153664,-27/60236288,0,0,0,-18/343,9/196,9/196,9/196\}\\
\{4,2,12\} & \{-19683/7086739046912,75/3136,-192347/60236288,0,-153/2744,-3/196,18/343,-3/196,9/196,3/28\}\\
\{4,2,13\} & \{-19683/7086739046912,-3429/153664,-160731/60236288,0,18/343,0,18/343,9/196,9/196,9/196\}\\
\{4,2,14\} & \{-19683/7086739046912,-3429/153664,-160731/60236288,0,18/343,0,18/343,9/196,9/196,9/196\}\\
\{4,2,15\} & \{-19683/7086739046912,-3429/153664,-160731/60236288,0,18/343,0,18/343,9/196,9/196,9/196\}\\
\{4,2,16\} & \{-19683/7086739046912,3483/153664,-171099/60236288,0,18/343,0,-18/343,9/196,9/196,9/196\}\\
\{4,2,17\} & \{-19683/7086739046912,-3429/153664,-160731/60236288,0,-18/343,0,-18/343,9/196,9/196,9/196\}\\
\{4,2,18\} & \{-19683/7086739046912,-3429/153664,-160731/60236288,0,-18/343,0,-18/343,9/196,9/196,9/196\}\\
\{4,2,19\} & \{-19683/7086739046912,3483/153664,-171099/60236288,0,-18/343,0,18/343,9/196,9/196,9/196\}\\
\{4,2,20\} & \{-19683/7086739046912,3483/153664,-171099/60236288,0,18/343,0,-18/343,9/196,9/196,9/196\}\\
\{4,2,21\} & \{-19683/7086739046912,-3429/153664,-160731/60236288,0,-18/343,0,-18/343,9/196,9/196,9/196\}\\
\{4,2,22\} & \{-19683/7086739046912,3387/153664,-150571/60236288,0,-135/2744,3/196,18/343,3/28,9/196,-3/196\}\\
\{4,2,23\} & \{-19683/7086739046912,3483/153664,-171099/60236288,0,18/343,0,-18/343,9/196,9/196,9/196\}\\
\{4,2,24\} & \{-19683/7086739046912,3483/153664,-171099/60236288,0,-18/343,0,18/343,9/196,9/196,9/196\}\\
\{4,2,25\} & \{-19683/7086739046912,-3429/153664,-160731/60236288,0,18/343,0,18/343,9/196,9/196,9/196\}\\
\{4,2,26\} & \{-19683/7086739046912,3483/153664,-171099/60236288,0,18/343,0,-18/343,9/196,9/196,9/196\}
\end{tabular}
\end{center}
\vspace{1cm}

\begin{center}
\begin{tabular}{c|c}
\{$N=4$,$k=3$,$i=1-2$\}&\{$\Delta$, $H$, $T$, $L$, $P$, $M$, $D_{xy}$, $S_1$, $S_2$, $S_3$\}\\ \hline
\{4,3,1\} & \{0,3799/65856,-6374737/1626379776,0,-18/343,0,0,1/4,1/4,1/4\}\\
\{4,3,2\} & \{0,1/192,-1/13824,0,0,0,0,1/4,1/4,1/4\}
\end{tabular}
\end{center}

\end{document}